\documentclass[a4paper, reqno]{amsart}
\usepackage[utf8]{inputenc}
\usepackage[T1]{fontenc}
\usepackage{amssymb}
\usepackage{amsfonts}
\usepackage{amsmath, amsthm, mathabx, graphicx}
\usepackage{tensor}
\usepackage[new]{old-arrows}
\usepackage{mathrsfs}
\usepackage{hyperref}
\usepackage{tikz}
\usetikzlibrary{matrix, arrows.meta}
\usetikzlibrary{cd}
\usepackage{pdfpages}

\newtheorem{sz}{Satz}[section]

\newtheorem{thr}[sz]{Theorem}
\newtheorem{dt}[sz]{Definition}

\newtheorem{pr}[sz]{Proposition}
\newtheorem{re}[sz]{Remark}
\newtheorem{co}[sz]{Corollary}

\theoremstyle{definition}

\newtheorem*{prob}{Problem}
 

\def\textmap#1{\mathop{\vbox{\ialign{
                                  ##\crcr
      ${\scriptstyle\hfil\;\;#1\;\;\hfil}$\crcr
      \noalign{\kern 1pt\nointerlineskip}
      \rightarrowfill\crcr}}\;}}

\def\qmod#1#2{{\hbox{}^{\displaystyle{#1}}}\!\big/\!\hbox{}_{
\displaystyle{#2}}}


\def\C{{\mathbb C}}

\def\H{{\mathbb H}}

\def\R{{\mathbb R}}
\def\Z{{\mathbb Z}}
\def\ag{{\mathfrak a}}
\def\bg{{\mathfrak b}}

\def\g{{\mathfrak g}}
\def\hg{{\mathfrak h}}
\def\ig{{\mathfrak i}}

\def\mg{{\mathfrak m}}

\def\tg{{\mathfrak t}}

\def\id{\mathrm{id}}
\def\End{\mathrm{End}}
\def\Hom{\mathrm{Hom}}
\def\Aut{\mathrm{Aut}}
\def\SL{\mathrm{SL}}
\def\SU{\mathrm{SU}}
\def\PSL{\mathrm{PSL}}
\def\GL{\mathrm{GL}}
\def\Spin{\mathrm{Spin}}

\def\SO{\mathrm{SO}}
\def\PU{\mathrm{PU}}
\def\so{\mathfrak{so}}
\def\su{\mathrm{su}}

\def\g{\mathfrak{g}}
\def\Ad{\mathrm{Ad}}
\def\ad{\mathrm{ad}}
\newcommand{\extpw}{\mathchoice{{\textstyle\bigwedge}}%
    {{\bigwedge}}%
    {{\textstyle\wedge}}%
    {{\scriptstyle\wedge}}}
\def\extp{{\extpw}\hspace{-2pt}}

\newcommand\cal{\mathcal}

\def\edf{\coloneq}

\begin{document}

\title[Reductive homogeneous spaces associated with real forms] {Reductive homogeneous spaces associated with real forms. A gauge-theoretical generalisation} 
\author{Nicolas Al Choueiry}
\address{Aix Marseille Université, CNRS, Centrale Marseille, I2M, UMR 7373, 13453 Marseille, France}
\email[Nicolas Al Choueiry]{nicolas.choueiry@hotmail.com}
\author{Andrei  Teleman}
\email[Andrei  Teleman]{andrei.teleman@univ-amu.fr}

\begin{abstract}

Let $G$ be a connected complex Lie group. A real form of $G$ is a closed subgroup $H\subset G$ whose Lie algebra $\hg$ is a real form of the Lie algebra $\g$ of $G$. A pair $(G,H)$ of this type is reductive, and the corresponding quotient $G/H$ is a reductive homogeneous space whose canonical connection is torsion free. 

Regarded as a  principal $H$-bundle over $G/H$, $G$ comes with  tensorial 1-form $\alpha$ of type $\Ad$ and a natural left invariant connection $A$. 
This remark suggests the following natural gauge theoretical generalisation of the class of reductive pairs of the form $(G,H)$ as above: 

Let $H$ be an arbitrary Lie group.  A triple $(P\textmap{\pi}M,\alpha,A)$, where $P\textmap{\pi}M$ is a principal  $H$-bundle,  $\alpha$   a tensorial 1-form  of type $\Ad$ on $P$ and $A$ a connection on $P$ will be called admissible if the induced linear maps $A_y\to \hg$, $y\in P$, are all isomorphisms. If this is the case one obtains a canonical linear connection $\nabla^\alpha_A$ on $M$ and a canonical   almost complex structure $J^\alpha_A$ on $P$ which,   by a result of R. Zentner, is integrable if an only if the pair $(\alpha,A)$ satisfies a  gauge invariant system of two first order differential equations. 
A triple $(P\textmap{\pi}M,\alpha,A)$ as above will be called integrable, or Zentner triple, if this integrability condition for $J^\alpha_A$ is satisfied.

The main result of \cite{Chou} states that any integrable triple $(P\textmap{\pi}M,\alpha,A)$ with $M$ simply connected  and $\nabla^\alpha_A$ complete can be identified with the triple associated with a real form of a complex Lie group;  in particular there exists a complex Lie group $G$, a real form $H'\subset G$ of $G$ with $H'\simeq H$ such that the pair $(M,\nabla^\alpha_A)$ can be identified with the reductive homogeneous space $G/H'$ endowed with its canonical connection.  In this article we explain the strategy of the proof  of this classification result and we prove in detail a theorem which plays an important role in this strategy and is of independent interest.  In the last section we introduce the moduli spaces of integrable pairs on a principal bundle, and we give explicit examples. 
 
  \end{abstract}
\maketitle
\begin{quotation}
	\noindent{\bf Key Words}: {homogeneous space,   principal bundles, connections, Lie group, real form.
	}
	
	\noindent{\bf 2020 Mathematics Subject Classification}:  Primary 53C30,    Secondary 22F30, 53C07.
\end{quotation}

\tableofcontents

\section{Introduction}\label{intro}

A pair $(G,H)$ consisting of a Lie group $G$ and a closed subgroup $H$ is called reductive if the Lie subalgebra $\hg\subset \g$ of $H$ has a complement $\mg$ which is invariant under the adjoint representation $\Ad_H:H\to \GL(\g)$ of $H$.

Let  $(G,H)$ be a reductive pair and $\mg$ a fixed $\Ad_H$-invariant complement of $\hg$. We will denote by $\Ad_H^\mg:H\to \GL(\mg)$ the representation of $H$ on $\mg$ which is induced by  the adjoint $\Ad_H$ of $H$ on $\g$.

The canonical $\g$-valued left invariant form $\omega\in A^1(G,\g)$ \cite[p. 41]{KN1} on $G$ decomposes as $\omega=\theta+\ag$, where the forms $\theta\in A^1(G,\hg)$, $\ag\in   A^1(G,\mg)$ are also left invariant; these forms play an important role in the theory of reductive homogeneous spaces.

The form $\ag\in   A^1_{\Ad_H^\mg}(G,\mg)$ is an $\mg$-valued tensorial tensorial form of type $\Ad_H^\mg$ on $G$ viewed as principal bundle over  $G/H$ (see \cite[Example 5.1 p. 55]{KN1}), so it can be regarded as 1-form on  $G/H$ with values in the associated vector bundle $E\edf G\times_{\Ad_H^\mg}\mg$ on $G/H$. For any point $x\in G/H$ the linear map $\ag_x:T_x(G/H)\to E_x$ is an isomorphism, so $\ag$ defines an isomorphism $\bg:T_{G/H}\to E$ of  vector bundles  on $G/H$.

The form $\theta \in A^1(G,\hg)$ is the connection form of a   left invariant connection $A\subset T_G$ on the principal bundle $\pi:G\to G/H$ whose fibre $A_g$ at a point $g\in G$ is the left translation $l_{g*}(\mg)$ of $\mg$.

The connection $A$ induces a linear connection $\nabla_A^{\Ad_H^\mg}$ on the associated bundle $E$, so it also induces a linear connection $\nabla:=\bg^{-1}(\nabla_A^{\Ad_H^\mg})$ on the tangent bundle $T_{G/H}$ of the homogenous space $G/H$.   It coincides with Nomizu's canonical connection of the second kind on reductive homogeneous space $G/H$  \cite[p. 49]{No}, which  can be  characterised as follows:
\begin{itemize}
\item $\nabla$ is the unique  left invariant connection on $G/H$ satisfying  condition \cite[(A2) p. 47]{No}. In particular $\nabla$ satisfies condition \cite[(A1) p. 47]{No} and is geodesically complete.
\item via Nomizu's classification theorem of the left invariant connections on a reductive homogeneous space \cite[Theorem 8.1]{No}, $\nabla$ is associated with the trivial $H$-invariant bilinear form $\mg\times\mg\to\mg$ \cite[Theorem 10.2]{No}. In particular the torsion and the curvature of $\nabla$ are given by the formulae
\begin{equation}\label{torsion-curvature}
T^\nabla(X,Y)=-[X,Y]_{\mg}, \ R(X,Y)(Z)=-[[X,Y]_\hg,Z],
\end{equation}
for $X$, $Y$, $Z\in\mg$, where the subscripts $_{\mg}$, $_{\hg}$ stand for the projections on the respective subspaces \cite[section 9]{No}.
\end{itemize}

Note that Nomizu also introduces the canonical connection of the first kind associated with $\mg$: it is the unique left invariant {\it torsion free} connection $\nabla'$  on $G/H$ satisfying  condition \cite[(A1) p. 47]{No}.
\vspace{2mm}

This article will focus on a special class of reductive pairs: the pairs formed by a complex Lie group $G$ and a real form of $G$ in the following sense:
\begin{dt}
Let $G$ be a complex Lie group. A closed subgroup $H\subset G$ will be called a real form of $G$ if its Lie algebra $\hg$ is a real form of $\g$, i.e. if one of the following equivalent conditions is satisfied:
\begin{enumerate}
	\item $\hg$ and $i\hg$ give a direct sum decomposition of $\g$.
	\item the canonical $\C$-linear map $\hg\otimes_\R\C\to \g$ induced by the inclusion morphism $\hg\hookrightarrow \C$ is an isomorphism. 
\end{enumerate} 
\end{dt}

Note that
\begin{re}
Let $H$ be a real form of $G$. Then $i\hg$ is an $\Ad_H$-invariant complement of $\hg$ in 	$\g$, in particular $(G,H)$ is a reductive pair.
\end{re}
\begin{proof}
Indeed, let $h\in H$. The inner automorphism $\iota_h\in \Aut(G)$ is holomorphic, so its differential $\Ad_h=(\iota_h)_{*e}$ at the unit element $e\in G$ is $\C$-linear. Since $\Ad_h$ leaves $\hg$ invariant, it will also leave invariant $i\hg$.	
\end{proof}

Therefore
\begin{re}\label{alpha-A-rem}
For a real form $H$ of $G$ we have a canonical choice of a	 complement of $\hg$ in $\g$, so we obtain
\begin{enumerate} 
\item A canonical tensorial 1-form $\ag\in A^1_{\Ad_H^\mg}(G,i\hg)$ of type $\Ad_H^{i\hg}$. Using the obvious isomorphism $\hg\to i\hg$, we can write $\ag=-i\alpha$ for tensorial form  $\alpha\in A^1_{\Ad_H}(G,\hg)$ of type $\Ad_H$.
\item A  canonical left invariant connection $A$ on the principal $H$-bundle  $\pi:G\to  G/H$.	

\end{enumerate}
\end{re}

Therefore one also obtains a canonical geodesically complete connection $\nabla$ on the quotient $G/H$ which is induced by $A$ via the vector bundle isomorphism $\bg$ associated with $\ag$.

Note also that $[i\hg,i\hg]\subset  \hg$, so formula (\ref{torsion-curvature}) gives 
\begin{equation}\label{torsion-curvature-new}
T^\nabla(X,Y)=0, \ R(X,Y)(Z)=-[[X,Y],Z],
\end{equation}
for any $X$, $Y$, $Z\in i\hg$. In particular, taking into account, \cite[Theorem 10.1]{No} we see that $\nabla=\nabla'$. In other words
\begin{re}
Let $H$ be a real form of $G$.   Nomizu's   canonical connections $\nabla$, $\nabla'$ on $G/H$ associated with the  canonical complement $i\hg$	of $\hg$ coincide. 
\end{re}

The connection $\nabla=\nabla'$ will be called the canonical connection on the homogeneous space $G/H$.

The (real) tangent bundle $T_G$ of $G$ comes with a left invariant direct sum decomposition $T_G=A\oplus V$, where $V$ is the vertical tangent sub-bundle. The almost complex structure $J_G$ which defines the complex structure of $G$ intertwines these two sub-bundles. More precisely let $j_e^H:A_e=i\hg \to \hg=V_e$ be the isomorphism defined by the multiplication by $i$ and let $j^H:A\to V$ be the left invariant extension  extension of $j_e^H$.  Then, with respect to the  decomposition $T_G=A\oplus V$, we have
$$
J_G=  \begin{pmatrix}
    0 & -(j^{H})^{-1} \\
    j^{H} & 0 
    \end{pmatrix}. 
    $$ 
Now note that the bundle isomorphism $j^H:A\to V$ can be expressed explicitly in terms of the pair $(\alpha,A)$ which interviens in Remark \ref{alpha-A-rem}. More precisely, the restriction of $\alpha$ to $A$ defines an isomorphism $A_g { \textmap{\simeq a_g}}\hg$ for any  $g\in G$.  On the other hand for $g\in G$ we also have the canonical isomorphism $\hg\textmap{\simeq \#_g}V_g$ induced by the right action of $H$ on $G$ and which is given explicitly by $\zeta\mapsto \zeta^\#_g$.  
\begin{re}
The bundle isomorphism $j^H:A\to V$ is given by 	family of isomorphisms $(\#_g\circ a_g)_{g\in G}$.
\end{re}

The starting point of this article is the following remark which plays a crucial role in \cite{Chou}: for any Lie group $H$, any principal $H$-bundle $\pi:P\to M$ and any pair $(\alpha,A)$ consisting of a tensorial 1-form $\alpha\in A^1_{\Ad_H}(P,\hg)$ \cite[section II.5]{KN1} of type $\Ad$ satisfying a natural non-degeneracy condition, one can define a bundle isomorphism $j^\alpha_A:A\to V$ in a similar way, so one obtains a canonically associated almost complex structure $J^\alpha_A$ on $P$. This construction will be detailed in the next section.

 \section{Integrable triples and associated complex structures on principal bundles}
 
 \subsection{Admissible triples. Integrable triples} 
 
 Let $H$ be a Lie group and  $P\textmap{\pi} M$ be a principal $H$-bundle.
\begin{dt}\label{admiss-def}
A tensorial 1-form $\alpha$ of type $\Ad$ on $P$ will be called admissible if for any $y\in P$, the linear map ${T_yP}/{V_y}   \textmap{a_y} \hg$ induced by $\alpha_y$ is an isomorphism. 
\end{dt}

Note that (see \cite[Remark 1.2.1]{Chou}):

\begin{re}\label{alpha-beta}
The data of an admissible 1-form   of type $\Ad$ on $P$ is equivalent to the data of a vector bundle isomorphism $T_M\textmap{\simeq} \Ad(P)\coloneq P\times_\Ad\hg$.	
\end{re}
The isomorphism $\beta: T_M\to \Ad(P)$ associated with an admissible form $\alpha$ is defined as follows: for any $x\in M$, $v\in T_xM$, we have $\beta(v)=[y,\alpha(\tilde v)]$, where $y\in P_x$ and $\tilde v\in T_yP$ is a lift of $v$ at $y$. 

Using the bundle isomorphism $\beta$  we can endow the tangent bundle $T_M$ with the linear connection $ \nabla^\alpha_A=\beta^{-1}(\nabla^\Ad_A)$, the pull back via $\beta$ of the linear connection $\nabla^\Ad_A$ on $\Ad(P)$ which is associated with $A$ in the sense of \cite[section III.1]{KN1}, \cite[section 3.2]{Te}.

Note that the canonical structure
$$[\cdot,\cdot]:\Ad(P)\times_M \Ad(P)\to\Ad(P)$$
of a Lie algebra bundle on $\Ad(P)$ is $\nabla^\Ad_A$-parallel, so:

\begin{re}\label{[]est-nabla-parall}
The Lie algebra bundle structure $[\cdot,\cdot]_\alpha: T_M \times_M T_M \to T_M$ on $T_M$ induced by $[\cdot,\cdot]$ via $\beta$ is $\nabla_A^\alpha$-parallel.
\end{re}

Let $\alpha \in A^1_{\Ad}(P, \mathfrak{h})$ be an admissible  tensorial 1-form of type $\Ad$, and $A$ a connection on $P$. Composing the fibrewise isomorphic map  $ T_P/V \to   \mathfrak{h}$ induced by $\alpha$ with the obvious isomorphism
$$\begin{tikzcd}
A \ar[r, hook] \ar[rr, bend right = 20, "\simeq"'] & T_P \ar[r, two heads] & T_P/V
\end{tikzcd}
$$
we obtain a fibrewise isomorphic map $a:A\to \hg$. On the other hand the right action of $H$ on $P$ defines a family of isomorphisms $(\#_y:\hg\to V_y)_{y\in P}$. The family of compositions $(\#_y\circ a_y)_{y\in P}$ defines a bundle isomorphism $j^\alpha_A:A\to V$.
\begin{dt}\label{JalphaA}
Let $\pi:P\to M$ be a principal $H$-bundle, let $\alpha$ be an admissible  tensorial 1-form of type $\Ad$ on $P$ and $A$ a connection on $P$. The almost complex structure associated with the triple $(P \textmap{\pi} M, \alpha, A)$ is the almost complex structure $J^{\alpha}_A: T_P \to T_P$ on $P$ defined with respect to the decomposition $T_P=A\oplus V$ by :
$$
(J^{\alpha}_A)_y   \edf \begin{pmatrix}
    0 & -(j^{\alpha}_{A,y})^{-1} \\
    j^{\alpha}_{A,y} & 0 
    \end{pmatrix}. 
    $$
\end{dt}

The integrability  of  $J^{\alpha}_A$ has been studied in \cite{Ze}. His result states 
\begin{thr}\cite{Ze}
\label{ZentTh} The almost complex structure $J^{\alpha}_A$ is integrable (so it defines a complex manifold structure on $P$) if and only if $(\alpha,A)$ is a solution of the differential system
\begin{equation} \label{ZentEq}
\left\{\begin{array}{ccc}
d_A \alpha &  = & 0 \\ 
\Omega_A & =&   \frac{1}{2}[ \alpha \wedge \alpha].
\end{array}\right. \tag{$Z$}
\end{equation}
where $\Omega_A \in A^2_{\Ad}(P,\mathfrak{h})$ is the curvature form of $A$.
\end{thr}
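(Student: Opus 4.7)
The plan is to apply the Newlander--Nirenberg theorem in its dual form: $J^\alpha_A$ is integrable if and only if, on a neighbourhood of every point of $P$, there exists a coframe of $(1,0)$-forms whose exterior derivatives have vanishing $(0,2)$ component. I would produce such a coframe globally out of the data $(\alpha,A)$ and then translate the $(0,2)$-vanishing into an equation on $(\alpha,A)$.

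Let $\theta_A\in A^1(P,\hg)$ denote the connection form of $A$, and set $\omega:=\alpha+i\theta_A\in A^1(P,\hg\otimes_\R\C)$. Using the decomposition $T_P=A\oplus V$ one verifies $\omega\circ J^\alpha_A=i\omega$ on each summand: for horizontal $X\in A_y$ one has $\theta_A(X)=0$ and $J^\alpha_A X=(\alpha(X))^\#_y$, whereas for vertical $\eta^\#_y\in V_y$ one has $\alpha(\eta^\#_y)=0$, $\theta_A(\eta^\#_y)=\eta$, and $J^\alpha_A\eta^\#_y$ is the horizontal vector sent to $-\eta$ by $\alpha$. Since $\omega_y$ maps $A_y$ isomorphically onto $\hg$ and $V_y$ isomorphically onto $i\hg$, its components in any real basis of $\hg\otimes_\R\C$ form a global $(1,0)$-coframe on $P$.

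The core step is the computation of $d\omega$. Using the standard structure equations
\begin{equation*}
d\alpha \;=\; d_A\alpha-[\theta_A\wedge\alpha], \qquad d\theta_A \;=\; \Omega_A-\tfrac12[\theta_A\wedge\theta_A],
\end{equation*}
one obtains
\begin{equation*}
d\omega \;=\; d_A\alpha-[\theta_A\wedge\alpha]\;+\;i\Omega_A-\tfrac{i}{2}[\theta_A\wedge\theta_A].
\end{equation*}
A $\C$-basis of the space of $(0,1)$-vectors at $y$ is given by $\bar Z_\eta:=X_\eta+i\eta^\#_y$ for $\eta\in\hg$, where $X_\eta\in A_y$ is the unique horizontal vector satisfying $\alpha(X_\eta)=\eta$. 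Evaluating each of the four summands of $d\omega$ on a pair $(\bar Z_\eta,\bar Z_\zeta)$, and exploiting that $d_A\alpha$ and $\Omega_A$ are horizontal, that $\alpha$ kills vertical vectors and $\theta_A$ kills horizontal ones, a short direct computation collapses the cross and vertical--vertical contributions to
\begin{equation*}
d\omega(\bar Z_\eta,\bar Z_\zeta)\;=\;d_A\alpha(X_\eta,X_\zeta)\;+\;i\Bigl(\Omega_A(X_\eta,X_\zeta)-\tfrac12[\alpha\wedge\alpha](X_\eta,X_\zeta)\Bigr).
\end{equation*}

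Since the horizontal tensorial $2$-forms $d_A\alpha$, $\Omega_A$ and $[\alpha\wedge\alpha]$ are determined by their values on horizontal vectors and $\alpha\colon A_y\to\hg$ is an isomorphism by admissibility, the vanishing of the right-hand side for all $\eta,\zeta\in\hg$ and every $y\in P$ is equivalent to the Zentner system $(Z)$. Newlander--Nirenberg then yields the equivalence claimed. The only non-routine item I anticipate is bookkeeping with sign conventions, in particular the identity $[\theta_A\wedge\theta_A](\xi^\#,\eta^\#)=2[\xi,\eta]$ and the fundamental vector field relation $[\xi^\#,\eta^\#]=[\xi,\eta]^\#$ which underlie the structure equation for $\theta_A$; no deeper geometric input beyond this and Newlander--Nirenberg is required.
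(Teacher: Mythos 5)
Your proposal is correct, but note that the paper does not actually prove Theorem~\ref{ZentTh}: it is quoted from Zentner's article \cite{Ze}, whose own argument proceeds by computing the Nijenhuis tensor of $J^\alpha_A$ on horizontal lifts and fundamental vector fields. Your route is genuinely different and, to my mind, cleaner: you package the data into the single $\hg\otimes_\R\C$-valued form $\omega=\alpha+i\theta_A$, verify $\omega\circ J^\alpha_A=i\omega$, and reduce integrability to the vanishing of $(d\omega)^{0,2}$ via the dual (coframe) form of Newlander--Nirenberg. I checked the computation with the paper's conventions ($[\alpha\wedge\beta](u,v)=[\alpha(u),\beta(v)]-[\alpha(v),\beta(u)]$, hence $\Omega_A=d\theta_A+\tfrac12[\theta_A\wedge\theta_A]$ and $d_A\alpha=d\alpha+[\theta_A\wedge\alpha]$): evaluating on $\bar Z_\eta=X_\eta+i\eta^\#$, $\bar Z_\zeta=X_\zeta+i\zeta^\#$, the term $-[\theta_A\wedge\alpha]$ contributes $-2i[\eta,\zeta]$ and $-\tfrac{i}{2}[\theta_A\wedge\theta_A]$ contributes $+i[\eta,\zeta]$, which together give exactly $-\tfrac{i}{2}[\alpha\wedge\alpha](X_\eta,X_\zeta)$, so your displayed formula for $d\omega(\bar Z_\eta,\bar Z_\zeta)$ is right, and the separation into real and imaginary parts of $\hg\otimes_\R\C$ yields the two equations of $(Z)$ independently. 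What your approach buys, beyond economy, is that it makes Proposition~\ref{TripletsAss} transparent: for a real form $H\subset G$ one has $\omega_{MC}=\theta_H-i\alpha_H$, and the Maurer--Cartan equation $d\omega_{MC}+\tfrac12[\omega_{MC}\wedge\omega_{MC}]=0$ decomposes precisely into $d_{A_H}\alpha_H=0$ and $\Omega_{A_H}=\tfrac12[\alpha_H\wedge\alpha_H]$, so the system $(Z)$ appears as the natural gauge-theoretic generalisation of the Maurer--Cartan equation. The only points worth spelling out in a final write-up are (i) the standard equivalence between integrability and the vanishing of the $(0,2)$-part of $d$ of a local $(1,0)$-coframe (involutivity of $T^{0,1}$ plus Newlander--Nirenberg), and (ii) that the $n$ components of $\omega$ do form such a coframe because $\omega_y$ is an $\R$-linear isomorphism onto $\hg\otimes_\R\C$ intertwining $J^\alpha_A$ with multiplication by $i$; both are routine.
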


In the first equation, the symbol $d_A$ on the left  stands for the de Rham differential %

$$d_A:A^r_\Ad(P,\hg)=A^r(M,\Ad(P)\to A^{r+1}(M,\Ad(P)= A^{r+1}_\Ad(P,\hg)$$
associated with $A$ (\cite[section 3.2]{Te}, \cite[section II.5]{KN1}). In the second equation on the right we used the notation $[\cdot\wedge\cdot]$ for the bilinear map 
$$A^1_\Ad(P,\hg)\times A^1_\Ad(P,\hg)\to A^2_\Ad(P,\hg),\ 
[\alpha,\beta](v,w)\edf [\alpha(u),\beta(v)]-[\alpha(v),\beta(u)].
$$
Definition \ref{JalphaA} and  Theorem \ref{ZentTh} justify the following
\begin{dt}\label{DefTripAdmTripZentner}
Let $H$ be Lie group. 

An admissible $H$-triple  is a triple $(P \textmap{\pi} M,\alpha,A)$, where $\pi: P \to M$ is a principal $H$-bundle, $\alpha \in A^1_{\Ad}(P,\mathfrak{h})$ is an admissible 1-form of   type $\Ad$, and $A$ is a connection on $P$.
An admissible $H$-triple $(\pi: P \to M,\alpha,A)$ will be  called integrable (or Zentner triple) if $(\alpha,A)$ satisfies the integrability conditions (Z).
\end{dt}

Note that any admissible integrable $H$-triple $(P \textmap{\pi} M,\alpha,A)$ gives a a complex manifold $(P,J^\alpha_A)$ of complex dimension $\dim(H)$ which is compact when $H$ is compact. Therefore the classification of integrable triples (with compact structure group) is related to the classification   of (compact) complex manifolds.
\vspace{2mm}

If $(P \textmap{\pi} M,\alpha,A)$ is an integrable admissible triple, then the torsion of the linear connection $\nabla_A^\alpha$ vanishes, and we have an explicit formula for the curvature of $\nabla_A^\alpha$ in terms of the Lie algebra bundle structure $[\cdot,\cdot]_\alpha$ defined above.   Propositions \cite[Propositions 2.1, 2.2]{Ze} require no compatibility condition with a Riemannian metric and can be interpreted as follows:

\begin{pr} \label{courbure-nablaM}
Let $(\pi: P \to M,\alpha,A)$ be an integrable triple. Then the torsion $T^{\nabla_A^\alpha}$ of $\nabla_A^\alpha$ is zero, and the curvature $R^{\nabla_A^\alpha} \in A^2(M,\End(T_M))$ of $\nabla_A^\alpha$ is given by the formula
$$
R^{\nabla_A^\alpha}(u,v)(w) = [[u,v]_\alpha,w]_\alpha.
$$ 	
\end{pr}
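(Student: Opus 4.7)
The strategy is to transport both statements from the adjoint bundle $\Ad(P)$, where they follow immediately from the Zentner equations, back to $T_M$ via the isomorphism $\beta:T_M\to \Ad(P)$ from Remark \ref{alpha-beta}. Two identifications are used throughout. First, $\alpha$, regarded as a 1-form on $M$ with values in $\Ad(P)$, is precisely $\beta$; consequently the first Zentner equation $d_A\alpha=0$ is the same as $d_A\beta=0$, where $d_A$ is the exterior covariant derivative built from $\nabla^\Ad_A$. Second, by the very definition of $\nabla^\alpha_A$, one has $\beta(\nabla^\alpha_{A,u}v)=\nabla^\Ad_{A,u}(\beta(v))$ for all vector fields $u,v$ on $M$, and the fibrewise bracket $[\cdot,\cdot]_\alpha$ on $T_M$ is, by construction, the $\beta$-pullback of the bracket on $\Ad(P)$.

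For the torsion, I would expand the standard formula
$$(d_A\beta)(u,v)=\nabla^\Ad_{A,u}\beta(v)-\nabla^\Ad_{A,v}\beta(u)-\beta([u,v]),$$
apply $\beta^{-1}$, and identify the right-hand side with $T^{\nabla^\alpha_A}(u,v)$. The first Zentner equation then gives $T^{\nabla^\alpha_A}=0$.

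For the curvature, I would invoke the standard identification of the curvature of the connection induced on an associated bundle: for the adjoint bundle the curvature of $\nabla^\Ad_A$ acts as
$$R^{\nabla^\Ad_A}(u,v)\sigma=[\Omega_A(u,v),\sigma],\qquad \sigma\in\Gamma(\Ad(P)),$$
with $\Omega_A$ viewed as an $\Ad(P)$-valued 2-form on $M$. Unpacking the second Zentner equation with the convention $[\alpha\wedge\alpha](u,v)=2[\alpha(u),\alpha(v)]$ stated in the excerpt yields
$$\Omega_A(u,v)=[\beta(u),\beta(v)]=\beta([u,v]_\alpha).$$
Substituting and conjugating by $\beta$ then gives
$$R^{\nabla^\alpha_A}(u,v)w=\beta^{-1}\bigl([\beta([u,v]_\alpha),\beta(w)]\bigr)=[[u,v]_\alpha,w]_\alpha,$$
as required.

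The main points to watch are purely bookkeeping: the factor $\tfrac12$ in (Z) must match the wedge convention under which $[\alpha\wedge\alpha]$ doubles the bracket on decomposable inputs, and one must carefully distinguish $\Omega_A$ as a tensorial form on $P$ from its incarnation as an $\Ad(P)$-valued 2-form on $M$. Neither step requires any compatibility with a Riemannian metric, consistent with the remark in the excerpt about \cite[Propositions 2.1, 2.2]{Ze}.
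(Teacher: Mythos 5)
Your proof is correct. Note that the paper itself gives no argument for this proposition: it simply declares it to be a reinterpretation of \cite[Propositions 2.1, 2.2]{Ze}, adding only the remark that Zentner's proofs need no compatibility with a Riemannian metric. What you supply is the self-contained verification that the paper omits, carried out entirely on the base by transporting everything through the isomorphism $\beta:T_M\to\Ad(P)$ of Remark \ref{alpha-beta}. The two ingredients you use are exactly the right ones: the identity $(d_A\beta)(u,v)=\beta\bigl(T^{\nabla^\alpha_A}(u,v)\bigr)$, which follows from $\nabla^\Ad_{A,u}\beta(v)=\beta(\nabla^\alpha_{A,u}v)$ and turns the first Zentner equation into the vanishing of the torsion, and the standard fact $R^{\nabla^\Ad_A}(u,v)=\ad\bigl(\Omega_A(u,v)\bigr)$ for the induced connection on the adjoint bundle, which together with $\tfrac12[\alpha\wedge\alpha](u,v)=[\alpha(u),\alpha(v)]$ and the $\nabla^\Ad_A$-parallelism of the fibrewise bracket (Remark \ref{[]est-nabla-parall}) yields $\Omega_A(u,v)=\beta([u,v]_\alpha)$ and hence the curvature formula after conjugating by $\beta$. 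Your bookkeeping on the factor $\tfrac12$ is consistent with the paper's intended convention (the displayed definition of $[\cdot\wedge\cdot]$ contains an obvious typo in its variables, but the intended formula $[\alpha\wedge\alpha](u,v)=2[\alpha(u),\alpha(v)]$ is the one you use). This route is arguably cleaner than citing Zentner, since his propositions are phrased in a metric setting and one would otherwise have to check that the hypotheses strip away harmlessly.
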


\subsection{Examples}

In this section, we will present two classes of exemples of integrable triples: the triples associated with real forms of complex Lie groups, and the triples associated with oriented hyperbolic 3-manifolds.

\subsubsection{Integrable triples associated with real forms}\label{triples-real-forms}

Let $G$ be complex Lie group, $\omega\in A^1(G,\g)$ its canonical left invariant $\g$-valued 1-form, and $H$ a real form of $G$ (see section \ref{intro}). Using the direct sum decomposition $\mathfrak{g} = \mathfrak{h} \oplus i\mathfrak{h}$, we obtain the decomposition
$$
\omega = \theta_{H} - i\alpha_{H},
$$
where $\theta_{H} \edf \Re(\omega) \in A^1(G, \mathfrak{h})$, $\alpha_{H} = -\Im(\omega) = \Re(i\omega)$, and where $\Re$ ($\Im$) denote the real (respectively imaginary) part associated to the real structure on $\mathfrak{g}$ defined by the real form $\mathfrak{h}$.

The arguments explained in section \ref{intro} give (see \cite[section 1.3.1]{Chou} for details): 

\begin{pr}\label{TripletsAss}
Regard the canonical projection  $G\to G/H$ as a principal $H$-bundle. Then:
\begin{enumerate}
	\item $\theta_{H}$ is a connection form on the principal bundle $G \to G/H$.
	\item $\alpha_{H}$ is an admissible  tensorial 1-form of type $\Ad$ on the principal $H$-bundle $G \to G/H$.
	\item Let $A_{H}$ be the connection defined by $\theta_{H}$. The almost complex structure $J^{\alpha_{H}}_{A_{H}}$ associated (in the sense of Definition \ref{JalphaA}) with the admissible triple $(G \to G/H, \alpha_{H}, A_{H})$ coincides with the canonical almost complex structure of $G$.
	\item The triple $(G \to G/H, \alpha_{H}, A_{H})$ is an integrable triple.
\end{enumerate}
\end{pr}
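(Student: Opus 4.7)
My plan is to treat the four items separately, with (3) being the most substantive and (4) following essentially for free once (3) is in place.

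For (1), the strategy is to exploit the defining properties of the canonical left invariant form $\omega$: namely $\omega(\zeta^\#) = \zeta$ for $\zeta \in \g$, and $R_h^* \omega = \Ad_{h^{-1}}\omega$. Because $H$ is a real form, the adjoint action $\Ad_h$ of any $h \in H$ is $\C$-linear, hence commutes with the real structure on $\g$ whose fixed locus is $\hg$. Taking real parts of both identities with respect to this real structure then shows that $\theta_H(\zeta^\#) = \zeta$ for $\zeta \in \hg$ and that $R_h^* \theta_H = \Ad_{h^{-1}} \theta_H$, which are precisely the two defining properties of a connection form on $G \to G/H$.

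For (2), the same commutation of $\Ad_h$ with the real structure, applied to the imaginary part, gives both horizontality $\alpha_H(\zeta^\#) = -\Im(\zeta) = 0$ for $\zeta \in \hg$ and equivariance $R_h^* \alpha_H = \Ad_{h^{-1}}\alpha_H$, so $\alpha_H \in A^1_{\Ad}(G,\hg)$ is tensorial of type $\Ad$. For admissibility it suffices, by left invariance, to check at the identity: $V_e = \hg$, and on the complement $i\hg$ the map $\alpha_{H,e}|_{i\hg}: i\hg \to \hg$, $i\eta \mapsto -\Im(i\eta) = -\eta$, is visibly an isomorphism. Left translation then transports this isomorphism to every fibre.

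For (3), I will compute $j^{\alpha_H}_{A_H}$ and the bundle map $j^H$ appearing in the description of $J_G$ in the introduction, and verify they agree at $e$; left invariance of both maps then forces agreement on all of $G$. At $e$ we have $A_{H,e} = i\hg$; from the formula $a_e(i\eta) = -\eta$ obtained in (2) and the identification $V_e \simeq \hg$ via $\#_e$, one gets $j^{\alpha_H}_{A_H,e}(i\eta) = (-\eta)^\#_e$. On the other hand $j_e^H$ is multiplication by $i$ followed by the same identification, so $j_e^H(i\eta) = (i \cdot i\eta)^\#_e = (-\eta)^\#_e$. The two agree, and since $J^{\alpha_H}_{A_H}$ depends only on the left invariant data $(\alpha_H,A_H)$ it is left invariant, so the equality propagates to $G$.

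For (4), the slick route is to invoke Theorem \ref{ZentTh}: part (3) identifies $J^{\alpha_H}_{A_H}$ with $J_G$, which is integrable because it defines the complex manifold structure of $G$, hence $(\alpha_H,A_H)$ satisfies the Zentner system (Z). As a sanity check I would also note the direct derivation from the Maurer--Cartan equation $d\omega + \tfrac12[\omega\wedge\omega] = 0$: substituting $\omega = \theta_H - i\alpha_H$ and decomposing according to $\g = \hg \oplus i\hg$, the $\hg$-part reads $d\theta_H + \tfrac12[\theta_H\wedge\theta_H] = \tfrac12[\alpha_H\wedge\alpha_H]$, i.e.\ $\Omega_{A_H} = \tfrac12[\alpha_H\wedge\alpha_H]$, while the $i\hg$-part gives $d\alpha_H + [\theta_H\wedge\alpha_H] = 0$, i.e.\ $d_{A_H}\alpha_H = 0$. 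The main place where I expect to have to be careful is bookkeeping of the real structure and the factors of $i$ in step (3), in particular keeping the identifications $V_e \simeq \hg$ via $\#$ and $A_{H,e} = i\hg \subset \g$ distinct from each other.
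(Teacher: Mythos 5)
Your proposal is correct and takes essentially the same route the paper does (the paper only sketches this proof, deferring to the arguments of its introduction and to \cite{Chou}): decompose the Maurer--Cartan form according to $\g=\hg\oplus i\hg$ to get the connection/tensorial properties, reduce to the identity by left invariance to identify $j^{\alpha_H}_{A_H}$ with $j^H=\#\circ a$, and deduce item (4) from item (3) via Theorem \ref{ZentTh}. Your sign bookkeeping ($\alpha_H(i\eta)=-\eta=i\cdot i\eta$ under the identification $V_e\simeq\hg$) and the Maurer--Cartan cross-check of the system (\ref{ZentEq}) are both accurate.
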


\subsubsection{Integrable triples associated with hyperbolic 3-manifolds}

Recall first that an oriented Euclidean space $(E, \langle\cdot,\cdot\rangle)$  of dimension 3  is naturally endowed with a cross product
$$
\times: E \times E \to E
$$   
defined by the identity
$$
\langle x \times y, z \rangle = \mathrm{vol}(x, y, z),
$$
where $\mathrm{vol} \in \wedge^3(E^*)$ denotes the volume form of $(E, \langle\cdot,\cdot\rangle)$. Note that $(E, \times)$ is a Lie algebra. Let $\mathfrak{so}(E)$ be the Lie algebra of antisymmetric endomorphisms of $(E, \langle\cdot,\cdot\rangle)$. The map
$$
\alpha: E \to \mathfrak{so}(E)
$$
defined by $\alpha(x)(v) = x \times v$ is a Lie algebra isomorphism. In particular, we have the identity
\begin{equation}\label{isoAlgLie}
[\alpha(u), \alpha(v)](w) = (u \times v) \times w.
\end{equation}

Now let $(M, g)$ be an oriented  3-dimensional  Riemannian manifold. The tangent bundle $T_M$ is an oriented    Euclidean vector bundle of rank 3, so it is naturally endowed with the cross product
$$
\times: T_M \times_M T_M \to T_M
$$
which makes $T_M$ a Lie algebra bundle and satisfies the identity
$$
g_x(u \times v, w) = \mathrm{vol}_g(u, v, w),
$$
for any $x \in M$, $(u,v,w) \in T_xM^3$, where $\mathrm{vol}_g \in A^3(M, \mathbb{R})$ stands for the volume form of the oriented Riemannian manifold $(M, g)$. With these preparations we have (see \cite[Proposition 6.1, p. 133]{Gh}, \cite[section 1.3.2]{Chou}):

\begin{pr}\label{courbure-constante} Let $(M,g)$ be an oriented  3-dimensional  Riemannian manifold, $\nabla$ its Levi-Civita connection and $d^\nabla$ the associated de Rham differential.
\begin{enumerate}
\item 	The 1-form $\alpha \in A^1(M, \mathfrak{so}(T_M))$  defined by $\alpha(u)(v) = u \times v$ satisfies the equation $d^\nabla(\alpha) = 0$.
\item Suppose $g$ is a Riemannian metric of constant sectional curvature $\kappa$. Then the curvature form $R \in A^2(M, \mathfrak{so}(T_M))$ of $g$ is given by the formula
\begin{equation}\label{CourbVarHyp}
R(u, v)(w) = -\kappa (u \times v) \times w,
\end{equation}
which can be written more compactly as
\begin{equation}\label{CourbVarHypFormeCompacte}
R = -\frac{\kappa}{2}[\alpha \wedge \alpha].
\end{equation}

\end{enumerate}
\end{pr}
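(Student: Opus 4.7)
The plan is to reduce both statements to the single underlying fact that the cross product $\times\colon T_M\times_M T_M\to T_M$ is \emph{parallel} with respect to the Levi-Civita connection $\nabla$. This is immediate from its defining identity $g(u\times v,w)=\mathrm{vol}_g(u,v,w)$: differentiating both sides along an arbitrary vector field $X$ and using $\nabla g=0$ together with $\nabla\mathrm{vol}_g=0$, one obtains
\[
g\bigl(\nabla_X(u\times v),w\bigr)=g(\nabla_X u\times v,w)+g(u\times\nabla_X v,w)
\]
for every $w$, so non-degeneracy of $g$ yields the Leibniz rule $\nabla_X(u\times v)=\nabla_X u\times v+u\times\nabla_X v$.

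For part (1) I then compute $(\nabla_X\alpha(Y))(V)=\nabla_X(Y\times V)-Y\times\nabla_X V$, which by the parallelism just established simplifies to $(\nabla_X Y)\times V$; hence $\nabla_X\alpha(Y)=\alpha(\nabla_X Y)$ as a section of $\mathfrak{so}(T_M)$. Substituting into the standard formula $d^\nabla\alpha(X,Y)=\nabla_X\alpha(Y)-\nabla_Y\alpha(X)-\alpha([X,Y])$ gives
\[
d^\nabla\alpha(X,Y)=\alpha\bigl(\nabla_X Y-\nabla_Y X-[X,Y]\bigr)=\alpha\bigl(T^\nabla(X,Y)\bigr)=0,
\]
the last equality being torsion-freeness of the Levi-Civita connection.

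For part (2) I would combine the classical characterisation of a metric of constant sectional curvature $\kappa$, namely $R(X,Y)Z=\kappa\bigl(g(Y,Z)X-g(X,Z)Y\bigr)$, with the Euclidean vector triple product identity applied fibrewise in the oriented rank-$3$ bundle $T_M$:
\[
(u\times v)\times w=g(u,w)v-g(v,w)u.
\]
A direct comparison of the two formulae yields (\ref{CourbVarHyp}). The compact reformulation (\ref{CourbVarHypFormeCompacte}) is then immediate: unwinding the wedge bracket gives $[\alpha\wedge\alpha](u,v)=2[\alpha(u),\alpha(v)]$, and identity (\ref{isoAlgLie}) rewrites this as $[\alpha\wedge\alpha](u,v)(w)=2(u\times v)\times w$, so $-\tfrac{\kappa}{2}[\alpha\wedge\alpha](u,v)(w)=-\kappa(u\times v)\times w=R(u,v)(w)$. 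I do not anticipate any substantive obstacle: apart from the initial Leibniz rule for $\times$, both parts are routine translations between the Riemannian curvature tensor and the vector triple product.
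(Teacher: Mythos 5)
Your argument is correct and complete: the parallelism of $\times$ (from $\nabla g=0$, $\nabla\mathrm{vol}_g=0$ and non-degeneracy of $g$) gives $\nabla_X(\alpha(Y))=\alpha(\nabla_X Y)$, so $d^\nabla\alpha(X,Y)=\alpha(T^\nabla(X,Y))=0$, and part (2) is exactly the comparison of the constant-curvature formula with the fibrewise triple-product identity, the factor $\tfrac12$ being absorbed by $[\alpha\wedge\alpha](u,v)=2[\alpha(u),\alpha(v)]$. The paper itself gives no proof of this proposition, only citations to Ghys and to the thesis \cite{Chou}, and your route is the standard one those sources follow, so there is nothing substantive to compare; the only point worth making explicit in a written version is the cancellation of the $\nabla_X w$ terms when deriving the Leibniz rule for $\times$, which you have implicitly used correctly.
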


This implies:

\begin{co}\label{hyperbolic-coro}
Let $(M, g)$ be an oriented 3-dimensional Riemannian manifold, let $P_g$ be the $\mathrm{SO}(3)$-principal bundle of orthonormal frames compatible with the orientation, and let $A$ be the connection on $P_g$ corresponding to the Levi-Civita connection on $T_M$.

Assume that $g$ has constant sectional curvature $\kappa$. The triple $(P_g \to M, \alpha, A)$ is integrable if and only if $\kappa = -1$. If this is the case, we will call it the integrable tripe associated with $(M,g)$. 

In particular, the 6-dimensional manifold $P_g$ associated with any oriented Riemannian 3-manifold $(M,g)$ of constant sectional curvature $-1$  carries a canonical complex structure.
\end{co}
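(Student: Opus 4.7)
The plan is to read the Corollary directly off Proposition \ref{courbure-constante} by checking the two Zentner equations (Z) for the candidate triple. First I would note that equation \eqref{isoAlgLie} says the cross product makes $T_M$ into a Lie algebra bundle whose fibre at $x$ is isomorphic, via $\alpha$, to $\so(T_xM)$; in particular $\alpha: T_M \to \so(T_M)=\Ad(P_g)$ is a fibrewise linear isomorphism. By Remark \ref{alpha-beta}, $\alpha$ therefore corresponds to an admissible tensorial 1-form of type $\Ad$ on $P_g$ (which I shall denote by the same letter), and $(P_g\to M,\alpha,A)$ is an admissible triple in the sense of Definition \ref{DefTripAdmTripZentner}.

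Next I would translate the equations (Z) via the standard identification $A^r_\Ad(P_g,\so(3))=A^r(M,\so(T_M))$, under which $d_A$ becomes the covariant exterior derivative $d^\nabla$ associated to the Levi-Civita connection $\nabla$ and the curvature form $\Omega_A$ becomes the Riemann curvature tensor $R$. Proposition \ref{courbure-constante}(1) then gives $d_A\alpha=d^\nabla\alpha=0$ with no hypothesis on $\kappa$, so the first Zentner equation is automatic. For the second equation, the constant sectional curvature hypothesis together with Proposition \ref{courbure-constante}(2) yields $\Omega_A=R=-\tfrac{\kappa}{2}[\alpha\wedge\alpha]$, so the second equation of (Z) reduces to
$$-\kappa\,[\alpha\wedge\alpha]=[\alpha\wedge\alpha].$$

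The only remaining point, and the one I would treat most carefully, is to verify that $[\alpha\wedge\alpha]$ is nowhere zero, so that this last identity really forces $\kappa=-1$ rather than being vacuous. This is a pointwise computation: for any positively oriented orthonormal frame $(e_1,e_2,e_3)$ of $T_xM$, using \eqref{isoAlgLie} one gets $[\alpha\wedge\alpha](e_1,e_2)=2[\alpha(e_1),\alpha(e_2)]=2\alpha(e_1\times e_2)=2\alpha(e_3)$, which is nonzero because $\alpha$ is an isomorphism. Consequently the triple is integrable precisely when $\kappa=-1$, proving the first assertion. The second assertion is immediate from Theorem \ref{ZentTh}: in the case $\kappa=-1$ the integrability of $(\alpha,A)$ implies that the almost complex structure $J^\alpha_A$ of Definition \ref{JalphaA} is integrable, endowing the $6$-dimensional manifold $P_g$ with a canonical complex structure. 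I do not foresee any serious obstacle here; the content of the proof is essentially a bookkeeping check translating (Z) through the standard dictionary between the covariant calculus on the associated bundle $\so(T_M)$ and the principal-bundle calculus on $P_g$.
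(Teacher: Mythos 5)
Your proof is correct and follows exactly the route the paper intends: the corollary is stated as an immediate consequence of Proposition \ref{courbure-constante}, with the first Zentner equation given by part (1) and the second reducing to $-\kappa[\alpha\wedge\alpha]=[\alpha\wedge\alpha]$ by part (2). Your extra care in verifying that $[\alpha\wedge\alpha]$ is nowhere vanishing (via $[\alpha\wedge\alpha](e_1,e_2)=2\alpha(e_3)\neq 0$) is a point the paper leaves implicit, and it is exactly what is needed to conclude $\kappa=-1$ rather than a vacuous identity.
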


The special case of the hyperbolic space is interesting for us, because it yields an integrable triple which is also associated with a real form. More precisely:
 
\begin{re} \label{H3-real-form}
 The integrable triple associated with the hyperbolic space $(\H^3, g_{\H^3})$ corresponds, via a natural identification, to the integrable triple associated with the pair $(\PSL(2,\C), \PU(2))$, where $\PU(2)$ is considered as a real form of the complex Lie group $\PSL(2,\C)$. 
\end{re}

The principal bundle
$$
\PSL(2,\C) \to \PSL(2,\C)/\PU(2) = \H^3
$$
associated with the pair $(\PSL(2,\C), \PU(2))$ is the fundamental object intervening  in the famous Bryant theorem on surfaces of constant mean curvature 1 (CMC1 surfaces) in hyperbolic space \cite{Br}.

Note that   the hyperbolic $\H^3$ is simply connected and complete. It is natural to ask if this is a general phenomenon, i.e. if any integrable triple $(P \textmap{\pi} M,\alpha,A)$ with simply connected base $M$ and geodesically complete induced connection $\nabla^\alpha_A$ can be identified with the triple associated with a real form of a complex Lie group. The main result of \cite{Chou} states that this is the case, see \cite[Theorem 3.3.1]{Chou}. More precisely:

\begin{thr}\label{ThFondam}
Let $M$ be a connected differentiable manifold, $H$ a connected Lie group, and let $\tg = (\pi : P \to M, \alpha, A)$ be an integrable triple. Endow $P$ with the complex structure $J^\alpha_A$ and $M$ with the connection $\nabla_A^\alpha$.

If $\nabla_A^\alpha$ is geodesically complete and $M$ is simply connected, then $\tg$ is associated with  the real form of a complex Lie group. More precisely, there exist:
\begin{enumerate}
\item a connected complex Lie group $G$,
\item a real form $H'$ of $G$ and a Lie group isomorphism $\chi:H' \to H$,
\item a holomorphic principal bundle isomorphism $\Psi: G\to P$ of type $\chi$, 
\item a diffeomorphism $\psi: G/H'\to M$,
\end{enumerate}
such that the diagram
$$\begin{tikzcd} 
    G  \ar[r, "\Psi\simeq "] \ar[d] & P \ar[d, "\pi" ]
    \\  G/  H'
    \ar [r, "\psi \simeq"] & M
\end{tikzcd}
$$
is commutative, and the pair $(\alpha,A)$ corresponds via $(\psi,\Psi)$ to the pair $(\alpha_{H'},A_{H'})$ associated with the real form $H'$ (see section \ref{triples-real-forms}). 
\end{thr}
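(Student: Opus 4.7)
The plan is to recognise the Zentner conditions $(Z)$ as a Maurer-Cartan equation with values in the complexified Lie algebra $\g \edf \hg \oplus i\hg$, and then to integrate the resulting 1-form into a global $\chi$-equivariant holomorphic isomorphism $\Psi : P \to G$, using simple connectedness of $M$ and completeness of $\nabla^\alpha_A$ to obtain globality. Concretely, I form the $\g$-valued 1-form $\omega \edf \theta - i\alpha$ on $P$, where $\theta$ is the connection form of $A$. Combining the Cartan structure equation $d\theta + \frac{1}{2}[\theta \wedge \theta] = \Omega_A$ with the identity $d_A\alpha = d\alpha + [\theta \wedge \alpha]$, a direct expansion shows that
\begin{equation*}
d\omega + \frac{1}{2}[\omega \wedge \omega] = 0
\end{equation*}
splits into real and imaginary parts which reproduce precisely the two equations of $(Z)$. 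Admissibility of $\alpha$ together with the splitting $T_P = A \oplus V$ implies that each $\omega_y : T_yP \to \g$ is an $\R$-linear isomorphism, and inspection against the block form of $J^\alpha_A$ in Definition \ref{JalphaA} shows that it is $\C$-linear. On the target side, let $G$ be a connected complex Lie group with Lie algebra $\g$ and let $H' \subset G$ be the connected Lie subgroup integrating the real form $\hg$; then $H'$ is automatically a closed real form of $G$.

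The developing map theorem then yields, on the universal cover $p : \tilde P \to P$ (we must pass to $\tilde P$, since $M$ simply connected and $H$ connected do not force $P$ to be simply connected), a holomorphic local diffeomorphism $\tilde\Psi : \tilde P \to G$ with $\tilde\Psi^* \omega_G = p^* \omega$, unique up to a choice of basepoint. Completeness of $\nabla^\alpha_A$ promotes $\tilde\Psi$ to a covering map: the pullback of any left-invariant field on $G$ is complete on $\tilde P$, because under $\beta$ of Remark \ref{alpha-beta} its horizontal part corresponds to a $\nabla^\alpha_A$-geodesic field on $M$, while its vertical part is complete by the free $H$-action on fibres. Equivariance with respect to the $H'$-action on $G$ and the $H$-action on $P$ is built in; adjusting the quotient of $G$ within its isogeny class arranges $\chi : H' \to H$ to be an isomorphism. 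Then $\tilde\Psi$ descends to the sought-after $\Psi : P \to G$, inducing a diffeomorphism $\psi : G/H' \to M$ and carrying $(\alpha,A)$ to $(\alpha_{H'}, A_{H'})$.

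The main obstacle is precisely this descent and the correct choice of the global model $G$. Here a Cartan-Ambrose-Hicks type argument for affine manifolds is essential: by Proposition \ref{courbure-nablaM} and Remark \ref{[]est-nabla-parall}, the triple $(M, \nabla^\alpha_A, [\cdot,\cdot]_\alpha)$ has vanishing torsion, parallel Lie bracket, and curvature expressed algebraically through $[\cdot,\cdot]_\alpha$; for the canonical model $(G/H', \nabla^{\can})$ the analogous tensors at $eH'$ are given by (\ref{torsion-curvature-new}) and match the above under the chosen identification of Lie algebras. Simple connectedness of $M$ together with completeness of $\nabla^\alpha_A$ therefore produces a global affine identification $\psi$, and this in turn pins down the covering of $H'$ needed for a genuine group isomorphism $\chi : H' \to H$. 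Once these compatibilities are secured, holomorphicity of $\Psi$ is automatic from the $\C$-linearity of $\omega$, and the required commutative square with $(\alpha, A)$ going to $(\alpha_{H'}, A_{H'})$ follows tautologically from $\tilde\Psi^*\omega_G = p^*\omega$.
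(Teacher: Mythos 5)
Your route is genuinely different from the paper's. You integrate the Maurer--Cartan form $\omega=\theta_A-i\alpha$ (and your computation that the system (\ref{ZentEq}) is exactly the structure equation $d\omega+\frac{1}{2}[\omega\wedge\omega]=0$, and that $\omega$ is $\C$-linear for $J^\alpha_A$, is correct), then invoke a developing-map argument on the universal cover $\tilde P$. The paper instead defines $G$ \emph{intrinsically} as the symmetry group $\{f:P\to P\mid f^*\alpha=\alpha,\ f^*\theta_A=\theta_A\}$, shows that the graphs of its elements are the leaves of a holomorphic foliation of $P\times P$, and applies Theorem \ref{libre&trans} to produce the free, transitive, holomorphic action; this avoids universal covers altogether and makes $H'$ a stabilizer, hence automatically closed. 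Your approach is classical and can be pushed through, but as written it has genuine gaps precisely at the points the paper's strategy is designed to circumvent.

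Concretely: (1) The assertion that the connected subgroup $H'\subset G$ integrating $\hg$ ``is automatically a closed real form of $G$'' is false in general. For $G=\C^*\times\C^*$ and $\hg=\R(1,1)\oplus\R(i,i\lambda)$ with $\lambda$ irrational, $\hg$ is a real form of $\g=\C^2$ but $\exp(\hg)$ is dense in a $3$-dimensional subgroup. Closedness must be \emph{derived}, e.g.\ from the identification of the cosets $g\tilde H'$ with the (closed) fibres of $\tilde P\to M$. (2) The descent step hides the main difficulty. After identifying $\tilde P\simeq\tilde G$, the deck group $\Gamma\cong\pi_1(P)$ acts by left translations contained in $\tilde H'$; for $G\edf\Gamma\backslash\tilde G$ to be a Lie group at all one must show $\Gamma$ is \emph{central} in $\tilde G$, which requires an argument (it follows from the fact that the right $H$-action is globally defined on $P$, forcing the left action of $\Gamma$ to coincide with a right action of $\ker(\tilde H'\to H)$); ``adjusting the quotient of $G$ within its isogeny class'' states the conclusion rather than proving it. (3) Completeness of the fields $X_\xi\edf\omega^{-1}(\xi)$ for $\xi\in\hg$ (vertical, fundamental fields) and for $\xi\in i\hg$ (horizontal lifts of geodesics, complete by hypothesis) does not by itself give completeness of $X_\xi$ for general $\xi\in\g$, which is what the covering property of $\tilde\Psi$ needs; one must invoke a Palais-type integration theorem for the infinitesimal $\g$-action generated by the complete fields, or argue completeness of all $X_\xi$ directly. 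Your final Cartan--Ambrose--Hicks paragraph is then largely redundant once (1)--(3) are fixed, since $\psi$ is induced by $\Psi$.
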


The proof is long and technical. We explain below the   proof method: \\

The group $G$ (as an abstract group)   will be defined as follows:
$$
G \edf  \{f : P \to P \mid f \text{ is a diffeomorphism}, \ f^*(\alpha) = \alpha, \ f^*(\theta_A) = \theta_A\},
$$
where $\theta_A \in A^1(P, \hg)$ denotes the connection form of $A$. The difficulty is to endow $G$ with a complex Lie group structure that makes the natural action of $G$ on $P$ holomorphic, free, and transitive. We will use  the following theorem  which is of independent interest, and whose proof will be explained in detail below (see also \cite[Theorem 3.3.2, Proposition 3.3.3]{Chou}).
\begin{thr}
	\label{libre&trans} 
Let $M$ be a connected differentiable manifold of dimension $m$, 
$$\rho:G\times M\to M$$
be an effective action by diffeomorphisms of a group $G$ on $M$. 
For $g\in G$, let $\rho_g:M\to M$ denote the diffeomorphism $x\mapsto \rho(g,x)$.

Assume that the set of graphs
$
\{\mathrm{gr}(\rho_g)\mid g\in G\}
$
coincides with the set  $\mathscr{F}$ of leaves of a foliation ${\cal F}$ on $M\times M$. Then:
\begin{enumerate}
	\item the action $\rho$ is free and transitive.
	\item there exists a unique Lie group structure of dimension $m$ on $G$ with respect to which $\rho$ is a differentiable action. 
\end{enumerate}

Suppose that   moreover   $M$ is a complex manifold and that ${\cal F}$ is a holomorphic foliation, i.e., ${\cal F}$ is a holomorphic subbundle of the tangent bundle $T_M$. Then there exists a unique complex Lie group structure on $G$, of the same dimension as $M$, with respect to which the (free and transitive) action $\rho$ is holomorphic.
\end{thr}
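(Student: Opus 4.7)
The plan is to exploit two structural features of the foliation $\mathcal F$: each leaf is a graph over both factors of $M\times M$, so the projections $p_1, p_2$ restrict to diffeomorphisms on each leaf; and each slice $T_x\edf\{x\}\times M$ is a global transversal meeting every leaf exactly once. I would first establish freeness and transitivity. For transitivity, any $(x,y)\in M\times M$ lies in the leaf $\mathrm{gr}(\rho_g)$ for some $g\in G$, so $\rho_g(x)=y$. For freeness, if $\rho_g(x)=x$ then $(x,x)\in\mathrm{gr}(\rho_g)\cap\Delta_M$; since distinct leaves are disjoint, $\mathrm{gr}(\rho_g)=\Delta_M=\mathrm{gr}(\mathrm{id}_M)$, whence $\rho_g=\mathrm{id}_M$, and effectiveness forces $g=e$.

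Next I would fix $x_0\in M$, define $\phi\edf\mathrm{ev}_{x_0}:G\to M$, $g\mapsto\rho_g(x_0)$, and declare $\phi$ a diffeomorphism; this endows $G$ with a candidate smooth structure of dimension $m$. The heart of the theorem is to show that the induced map
\[
\tilde\rho:M\times M\to M,\qquad \tilde\rho(z,y)\edf\rho_{\phi^{-1}(z)}(y),
\]
is smooth: from $\rho_{gh}=\rho_g\circ\rho_h$ one obtains $\mu=\phi^{-1}\circ\tilde\rho\circ(\phi\times\phi)$ for multiplication on $G$, and inversion then follows from the implicit function theorem applied to $\tilde\rho(z,\cdot)$. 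To prove smoothness of $\tilde\rho$ near $(z_0,y_0)$, set $g_0\edf\phi^{-1}(z_0)$ and $w_0\edf\rho_{g_0}(y_0)$. Pick a foliation chart $W$ around $(y_0,w_0)$ with a smooth leaf coordinate $\nu_W$; using that each leaf is locally a graph over $M$ via $p_1$, shrink $W$ and change coordinates so that the horizontal chart coordinate coincides with $p_1|_W$. Pick a similar chart $W'$ around $(x_0,z_0)$ with leaf coordinate $\nu_{W'}$. The leaf $\mathrm{gr}(\rho_{g_0})$, being diffeomorphic to $M$ via $p_1$, is connected and joins the two chart regions, so standard foliation holonomy along a path in this leaf produces a smooth local diffeomorphism $\tau$ between the two leaf-label spaces. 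In these coordinates, $\tilde\rho(z,y)$ is the second factor of the unique point of $W$ whose horizontal coordinate is $y$ and whose $\nu_W$-label equals $\tau(\nu_{W'}(x_0,z))$, which is visibly smooth in $(z,y)$.

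Uniqueness of the Lie group structure of dimension $m$ is standard: in any such structure, freeness (transferred to the Lie algebra via one-parameter subgroups) forces $\phi$ to be an injective smooth immersion between equidimensional manifolds, hence a diffeomorphism, so the underlying smooth structure on $G$ must coincide with the one we constructed. The holomorphic case proceeds with no conceptual modification: the complex Frobenius theorem yields holomorphic foliation charts, so the leaf coordinates and the comparison map $\tau$ become holomorphic; this gives a complex manifold structure on $G$ (of complex dimension $\dim_\C M$) for which $\phi$ is biholomorphic and $\rho$ holomorphic, and the uniqueness argument transfers verbatim. The step I expect to be the main obstacle is the smoothness of $\tilde\rho$: one must justify the existence of foliation charts in which $p_1$ is part of the coordinate system, and then patch the local holonomy along a chain of foliation charts covering a path in $\mathrm{gr}(\rho_{g_0})$—conceptually routine foliation theory, but the technical heart of the argument.
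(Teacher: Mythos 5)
Your proposal is correct, and at the conceptual level it rests on exactly the same two observations as the paper's proof: the slices $\{x\}\times M$ are global transversals meeting each leaf exactly once, and the smooth structure on $G$ is the one transported from $M$ via $g\mapsto\rho_g(x_0)$ (part (1) is argued identically). The difference is in the technical execution of smoothness. The paper first shows that ${\cal F}$ is a \emph{simple} foliation, so its leaf space $\mathscr{F}$ is a manifold with $q:M\times M\to\mathscr{F}$ a submersion and each $f_x:S_x\to\mathscr{F}$ a diffeomorphism; after that, the action, multiplication and inversion maps are all exhibited as compositions of diffeomorphisms with the manifestly smooth map $(x,y)\mapsto(q(x,y),x)$, and no holonomy appears explicitly. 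You instead bypass the leaf space and prove smoothness of the division map $\tilde\rho$ directly, via holonomy transport along the connected leaf $\mathrm{gr}(\rho_{g_0})$ between two foliation charts adapted to $p_1$; the crucial point that makes your local holonomy computation actually compute $\tilde\rho$ (rather than just some leaf-label correspondence) is that each leaf is a \emph{global} graph, so the transported plaque through $(y, s(z,y))$ forces $s(z,y)=\rho_{\phi^{-1}(z)}(y)$ — you should make that identification explicit. In effect you re-prove by hand the content of the cited simple-foliation proposition; the paper's route is cleaner once that proposition is granted, while yours is more self-contained. Two further remarks: your reduction of multiplication and inversion to $\tilde\rho$ (via $\mu=\phi^{-1}\circ\tilde\rho\circ(\phi\times\phi)$ and the implicit function theorem) is a nice shortcut compared to the paper's separate slice arguments for $c$ and $\ig$; and your uniqueness argument (trivial isotropy forces the orbit map to be an equidimensional injective immersion, hence a diffeomorphism) addresses a point the paper's written proof does not spell out.
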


We will apply this theorem to the foliation ${\cal F}$ on $P \times P$ associated with the canonical {\it integrable} distribution $\Omega$ on this manifold, defined as follows:

The trivializations $\tau_A^\alpha: A \cong P \times \hg$, $\tau_V: V \cong P \times \hg$ define a global trivialization $T_P \to P \times (\hg \oplus \hg)$ of the tangent bundle $T_P$. In particular, for every $y \in P$, we obtain an isomorphism
$$
f_y: \hg \oplus \hg \to T_yP
$$
so for every pair $(y, z) \in P \times P$, we obtain an isomorphism
$$
f_{yz} \edf f_z \circ f_y^{-1}: T_yP \to T_zP
$$
The distribution $\Omega$ is defined by:
$$
\Omega_{(y,z)} \edf \mathrm{gr}(f_{yz}) \subset T_yP \times T_zP = T_{(y,z)}(P \times P),
$$
Note that a diffeomorphism $f: P \to P$ belongs to $G$ if and only if $\mathrm{gr}(f)$ is an integral submanifold of the distribution $\Omega$.

In order to complete the proof of the classification Theorem \ref{ThFondam} one shows that the set of leaves of $\Omega$ coincides  with the set of graphs $
\{\mathrm{gr}(\rho_g)\mid g\in G\}$. This  is obtained using the following theorem whose proof  is long and technical and goes beyond the goals of this article.
	 
\begin{thr} For every maximal integral submanifold $\omega$ of the distribution $\Omega$, the two natural maps $\omega \to P$ given by the restrictions of the projections $p_i: P \times P \to P$ are diffeomorphisms, in particular, $\omega$ is the graph of a diffeomorphism $f_\omega$ of $P$.
	
\end{thr}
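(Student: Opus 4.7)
The conclusion will follow once both $p_1|_\omega$ and $p_2|_\omega$ are shown to be diffeomorphisms, since then $f_\omega\edf p_2|_\omega\circ(p_1|_\omega)^{-1}$ is the diffeomorphism of $P$ whose graph is $\omega$. The involution $\sigma:(y,z)\mapsto(z,y)$ of $P\times P$ preserves $\Omega$ (because $f_{zy}=f_{yz}^{-1}$) and permutes maximal leaves, so it is enough to treat $p_1|_\omega$. The local diffeomorphism property is immediate: since $\Omega_{(y,z)}=\mathrm{gr}(f_{yz})$ is the graph of the isomorphism $f_{yz}:T_yP\to T_zP$, the restrictions of $dp_i$ to $\Omega_{(y,z)}$ are linear isomorphisms, and as $\omega$ is integral, $T_{(y,z)}\omega=\Omega_{(y,z)}$.

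For surjectivity I would lift paths. Given $(y_0,z_0)\in\omega$ and a smooth path $\gamma:[0,1]\to P$ with $\gamma(0)=y_0$, a lift $\tilde\gamma(t)=(\gamma(t),z(t))\in\omega$ through $(y_0,z_0)$ is characterised by the time-dependent ODE
$$\dot z(t)=f_{\gamma(t),z(t)}\bigl(\dot\gamma(t)\bigr)\quad\text{on }P.$$
Under the decomposition $T_P=A\oplus V$ this decouples into a vertical equation $\dot z_V(t)=\xi(t)^{\#}_{z(t)}$, where $\xi(t)\in\hg$ represents the vertical part of $\dot\gamma(t)$ via $\tau_V$, integrated globally by the right $H$-action, and a horizontal equation determined by the $\alpha$-matching condition $\alpha_{z(t)}(\dot z_A)=\alpha_{\gamma(t)}(\dot\gamma_A)$, which amounts to a parallel-transport problem for $\nabla^\alpha_A$ above an auxiliary curve in $M$. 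Geodesic completeness of $\nabla^\alpha_A$ (inherited from the hypotheses of Theorem \ref{ThFondam}) then ensures global existence on $[0,1]$, making $p_1|_\omega$ surjective and hence a covering map.

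For injectivity, given $(y_0,z_1),(y_0,z_2)\in\omega$, join them by a smooth path in $\omega$; its $p_1$-projection is a loop $\delta$ at $y_0$, and uniqueness of the lifting ODE forces $z_2$ to be the endpoint of the lift of $\delta$ starting from $z_1$. The Zentner equations (Z) make $\Omega$ the flat Ehresmann connection on $p_1:P\times P\to P$ whose associated monodromy is a homotopy-invariant homomorphism $\pi_1(P,y_0)\to\mathrm{Diff}(P)$. For a fibre loop $\delta(t)=y_0\,h(t)$ with $h(0)=h(1)=e$, the vertical part of $f_{yz}$ identifies fundamental vector fields by their $\hg$-parameter, so the ODE integrates explicitly as $z(t)=z_1\,h(t)$ and yields $z(1)=z_1$; the monodromy is therefore trivial on fibre loops. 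Combined with the homotopy exact sequence $\pi_1(H)\to\pi_1(P)\to\pi_1(M)$, the simple connectedness of $M$ (from Theorem \ref{ThFondam}) and the connectedness of $H$ imply that every class in $\pi_1(P,y_0)$ is represented by a fibre loop, so the monodromy vanishes and $p_1|_\omega$ is injective. The main obstacle is the second step: making precise the reduction of the horizontal part of the lift ODE to a parallel-transport equation for $\nabla^\alpha_A$, and then deriving global existence over $[0,1]$ from geodesic completeness alone, is the genuinely technical point, and accounts for most of the ``long and technical'' work to which the paper alludes.
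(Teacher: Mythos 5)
Your overall architecture is sound, and since the paper gives no proof of this statement (it defers entirely to \cite{Chou}), I am judging the argument on its own terms. The symmetry reduction via $\sigma$, the local-diffeomorphism step from $T_{(y,z)}\omega=\Omega_{(y,z)}=\mathrm{gr}(f_{yz})$, the fibre-loop monodromy computation $z(t)=z_1h(t)$ (correct because $f_{yz}$ preserves the $\hg$-parameter of vertical vectors), and the use of the surjection $\pi_1(H)\to\pi_1(P)$ (valid since $\pi_1(M)=1$) to kill the monodromy are all correct, and you correctly identify where the hypotheses of Theorem \ref{ThFondam} enter even though they are left implicit in the statement.

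The genuine gap is the step you yourself flag: global existence of lifts, hence surjectivity and the covering property. Your reduction of the horizontal part of the lift ODE to ``a parallel-transport problem for $\nabla^\alpha_A$ above an auxiliary curve in $M$'' cannot be right as stated: parallel transport along a \emph{given} curve is a linear ODE in a fixed fibre and exists globally for any linear connection, so if the reduction were literal the theorem would need no completeness hypothesis --- and it does need one (restrict the integrable triple of $\H^3$ to an open ball: the leaves of $\Omega$ no longer project onto the frame bundle). The point is that the base curve is itself an unknown: writing $\mu(t)=\pi(z(t))$ one gets $\dot\mu(t)=\beta^{-1}\bigl([z(t),\alpha(\dot\gamma(t))]\bigr)$, coupled back to the $A$-horizontal and vertical motion of $z(t)$ over $\mu$. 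Equivalently, the lift is the Cartan development, with respect to the absolute parallelism $\theta_A\oplus\alpha:T_P\to\hg\oplus\hg$, of the prescribed curve $t\mapsto(\theta_A(\dot\gamma(t)),\alpha(\dot\gamma(t)))$; global solvability of this nonlinear ODE on $P$ is precisely completeness of that parallelism, and deducing it from geodesic completeness of $\nabla^\alpha_A$ requires the torsion and curvature identities of Proposition \ref{courbure-nablaM} together with a Kobayashi--Nomizu-type argument on completeness of standard horizontal vector fields; it is not automatic. For the same reason your ``decoupling'' of the lift ODE is not a decoupling: both components of $\dot z(t)$ depend on the unknown point $z(t)$, and the right-translation trick integrates the equation only when the horizontal component vanishes --- which is exactly the fibre-loop case, the one place you legitimately use it.
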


\begin{proof}
See \cite[sections 2.1.2, 4.4 and Proposition 3.3.10]{Chou}.
\end{proof}

Now we  come back to the proof  of the crucial Theorem \ref {libre&trans}:

\begin{proof} (of Theorem \ref {libre&trans}):

(1) To show that $\rho$ is free, let $(g,x)\in G\times M$ such that $\rho(g,x)=x$, i.e., $\rho_g(x)=x$, i.e., $(x,x)\in \mathrm{gr}(\rho_g)$, which by assumption is a leaf of the foliation ${\cal F}$. But the diagonal $\Delta \edf \{(u,u)\mid u\in M\}$ is the graph of $\rho_e = \id_M$, so, by assumption, it is also a leaf of the foliation ${\cal F}$. Hence, $\mathrm{gr}(\rho_g)$ and $\Delta$ are two leaves of ${\cal F}$ passing through the point $(x,x)$. Since the set $\mathscr{F}$ of leaves of a foliation ${\cal F}$ gives a partition of the foliated manifold, it follows that $\mathrm{gr}(\rho_g) = \Delta = \mathrm{gr}(\rho_e)$, thus $\rho_g = \rho_e$, and since the action is effective, we obtain $g = e$.

To show that $\rho$ is transitive, let $(x,y) \in M \times M$. Since 
$\mathscr{F} = \{\mathrm{gr}(\rho_g) \mid g \in G\}$
is a partition of $M \times M$, there exists $g \in G$ such that $(x,y) \in \mathrm{gr}(\rho_g)$, i.e., $y = \rho_g(x)$. 
\vspace{2mm}\\
(2) First observe that ${\cal F}$ is a simple foliation, i.e., the set $\mathscr{F}$ of its leaves admits the structure of a differentiable manifold (not necessarily Hausdorff a priori), such that the natural surjection  
$$q:M\times M\to \mathscr{F}$$
is a submersion. This follows from \cite[Proposition 10.1]{Sh}, noting that for every point $p = (x,y) \in M \times M$, the submanifold $S_x \edf \{x\} \times M$ is a slice (transverse section) to the foliation ${\cal F}$ passing through $p$ and intersects each leaf $\mathrm{gr}(\rho_g)$ at a single point.

Endow $\mathscr{F}$ with the differentiable structure that makes the natural surjection $q: M \times M \to \mathscr{F}$ a submersion. Let $\iota_x: S_x \hookrightarrow M \times M$ denote the inclusion embedding, and observe that for each $x \in X$, the composition
$$f_x:S_x\to \mathscr{F},\ \begin{tikzcd}
S_x\ar[r, hook, "\iota_x"]\ar[rr, bend right, "f_x"] & M\times M\ar[r, two heads ]& \mathscr{F}
\end{tikzcd}
$$
is a diffeomorphism.

\begin{figure}[h]
\centering
\scalebox{0.5}
{\includegraphics{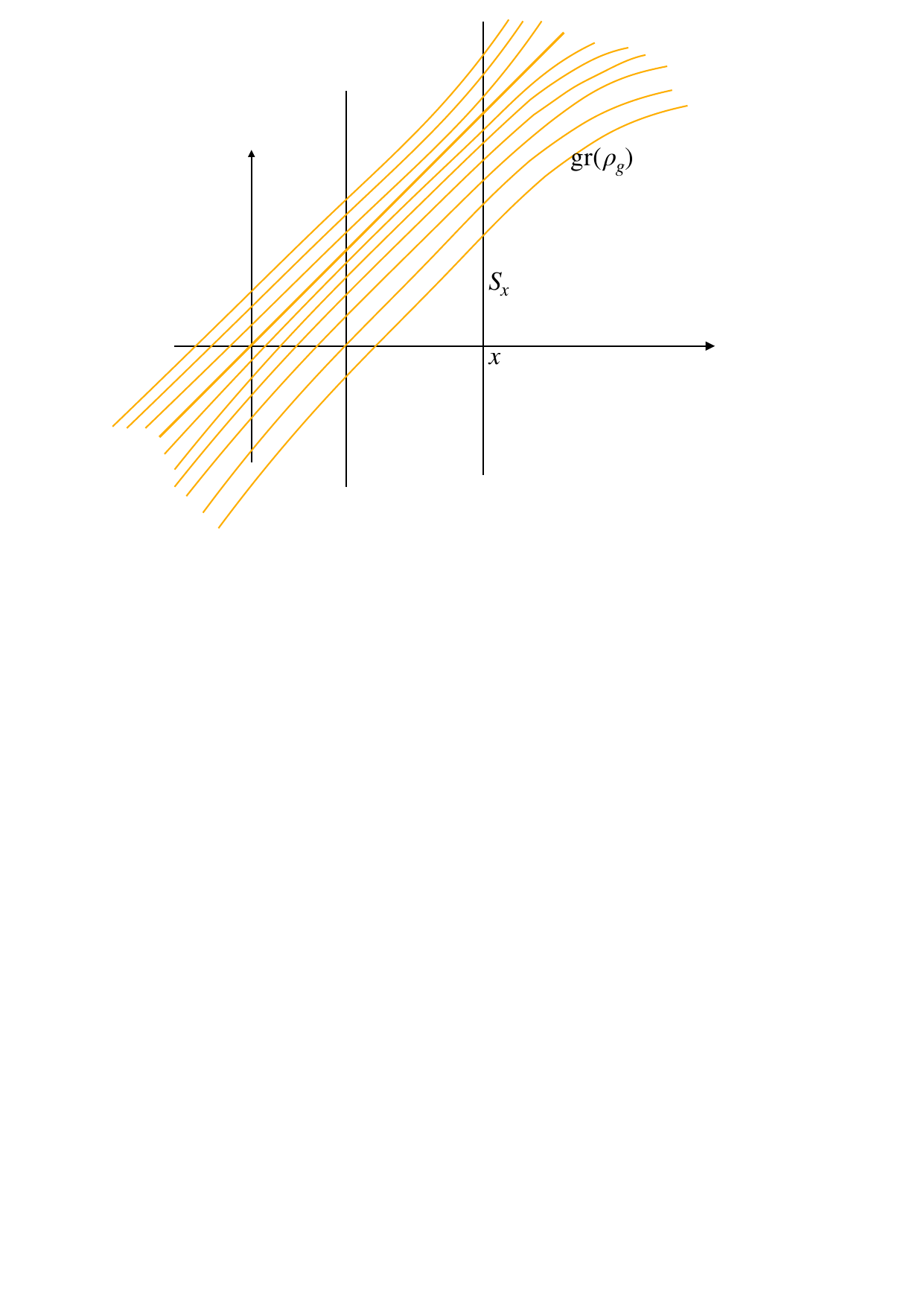}}
\label{branched}
\end{figure}

Indeed, first note that $f_x$ is obviously bijective. Furthermore, since $S_x$ is transverse to the fibers of the submersion $q$, it follows that $f_x$ is a local diffeomorphism. Therefore $f_x$ is a diffeomorphism; in particular, $\mathscr{F}$, being diffeomorphic to $S_x \simeq M$, is a Hausdorff manifold.

The map
$$
\phi:G\to \mathscr{F},\  G\ni g\mapsto  \mathrm{gr}(\rho_g)\in \mathscr{F}
$$
is bijective by hypothesis. Equip $G$ with the differentiable structure that makes this bijection a diffeomorphism. Note that, with respect to this differentiable structure, the map
$$
\rho^x:G\to M, \ \rho^x(g)=\rho(g,x)
$$
is a diffeomorphism for every $x\in M$. Indeed, we have $\rho^x = p_2|_{S_x} \circ f_x^{-1} \circ \phi$, so $\rho^x$ is a composition of three diffeomorphisms.

To conclude, it remains to show that, with respect to this differentiable structure on $G$:

\begin{enumerate}
\item[(a)]   the action $\rho:G\times M\to M$ is differentiable. 
\item[(b)]   the composition map $c:G\times G\to G$ is differentiable.
\item[(c)]   the inversion map $\ig :G\to G$ is differentiable.
\end{enumerate}

For (a), observe that the map $\tilde \rho:  G\times M\to M\times M$, defined by $\tilde \rho(g,x)=(x,\rho(g,x))$, is bijective and, via the identification $\phi:G\to \mathscr{F}$, its inverse is given by
$$
\tilde \rho^{-1}(x,y)=(q(x,y),x).
$$
This map is clearly differentiable. It is also a local diffeomorphism. Indeed, to prove this, we will use the direct sum decompositions
$$T_{(x,y)}(M\times M)=T_{(x,y)}S_x\oplus T_{(x,y)}(\mathrm{gr}(\rho_g)), \quad T_{(q(x,y),x)}(\mathscr{F}\times M)=T_{q(x,y)}\mathscr{F}\oplus T_xM.
$$
With respect to these decompositions, the matrix representation of the tangent map $(\tilde \rho^{-1})_{*(x,y)}$ is
$$
\begin{pmatrix}
	(f_x)_{*(x,y)}   & 0\\ 
	0   &u_{(x,y)}
\end{pmatrix},
$$
where $u_{(x,y)}: T_{(x,y)}(\mathrm{gr}(\rho_g))\to T_x M$ is the tangent map at $(x,y)$ of the restriction of the first projection to the graph $\mathrm{gr}(\rho_g)$. Since this restriction is a diffeomorphism, $u_{(x,y)}$ is an isomorphism. Moreover, $(f_x)_{*(x,y)}$ is an isomorphism because $f_x$ is a diffeomorphism.

Therefore, $\tilde \rho^{-1}$ is a diffeomorphism, so $\tilde \rho$ is a diffeomorphism, and thus its second component $\rho$ is differentiable.
\vspace{2mm}

For (b), fix $x_0\in M$ and note that $\rho^{x_0} \circ c = \rho \circ (\id_M \times \rho^{x_0})$. Since $\rho^{x_0}$ is a diffeomorphism and $\rho$ is differentiable, it follows that $c$ is differentiable.
\vspace{2mm}

For (c), observe that any map of the form
$$
f^y:S^y \edf M\times\{y\}\to \mathscr{F},\ \begin{tikzcd}
S^y\ar[r, hook]\ar[rr, bend right, "f^y"] & M\times M\ar[r, two heads ]& \mathscr{F}
\end{tikzcd}
$$
(where $y\in M$) is also a diffeomorphism (by the same argument as for $f_x$). For fixed $y_0\in M$, we have
$$
p_1|_{S^{y_0}} \circ (f^{y_0})^{-1} \circ \phi = \rho^{y_0} \circ \ig,
$$
thus, since $p_1|_{S^{y_0}}$, $f^{y_0}$, $\phi$, and $\rho^{y_0}$ are diffeomorphisms, $\ig$ will also be a diffeomorphism.
\vspace{4mm} \\
{\it Concerning the holomorphic case:}
\vspace{1mm}

The proof of the second part of Theorem \ref{libre&trans}, given above, easily adapts to the holomorphic case. More precisely, first note that Proposition \cite[Proposition 10.1]{Sh} admits a holomorphic version: let ${\cal F}$ be a holomorphic foliation on a complex manifold $X$; if through every point $x \in X$ passes a {\it complex} submanifold transverse to ${\cal F}$ (in the sense of \cite[Definition 9.2]{Sh}), which intersects each leaf in exactly one point, then the set $\mathscr{F}$ of leaves of ${\cal F}$ admits a structure of complex manifold (a priori not necessarily Hausdorff) such that the projection $X \to \mathscr{F}$ is a holomorphic submersion.

To obtain this result, it suffices to use the definition of holonomy germs \cite[p. 18]{Sh} for foliation charts which are holomorphic, and to note that the resulting holonomy germs will be germs of biholomorphisms.
\end{proof}
\section{A new gauge theoretical moduli problem}

Let $H$ be a Lie group and $\pi:P\to M$ a principal $H$-bundle. We denote by ${\cal A}(P)$ the space of connections on $P$ and we recall (see for instance \cite{Te}) that ${\cal A}(P)$ is an affine space with associated vector space $ A^1_\Ad(P,\hg)=A^1(M,\Ad(P))$, so it can be endowed with a natural topology induced by the Fréchet topology of $A^1(M,\Ad(P))$.

Recall that the gauge group $\Aut(P)$ of $P$ can be identified with the group of section $\Gamma(M,P\times_\iota H)$ of the Lie group bundle $P\times_\iota H$, which is the bundle associated with $P$ and the inner action $\iota:H\times H\to H$, $\iota(\chi,h)\edf \iota_{\chi}h$. The latter space  $\Gamma(M,P\times_\iota H)$ can be further identified with the space of smooth  maps $P\to H$ which are equivariant with respect to the right $H$-action on $P$ and the inner action of $H$ on itself. For a gauge transformation $f\in \Aut(P)$ we will denote by $\tilde f$ the associated equivariant map $P\to H$. Recall also $\Aut(P)=\Gamma(M,P\times_\iota H)$ acts from the left on the  affine space ${\cal A}(P)$  by $f(A)=f_*(A)$. In terms on connection forms, we have the the formula
$$
\theta_{f(A)}=(f^{-1})^*(\theta_A).
$$
$\Aut(P)$ acts also from the left on the spaces $A^r_\Ad(P,\hg)$ of tensorial forms of type $\Ad$ by the formula $f(\alpha)=(f^{-1})^*(\alpha)$. This gives
$$
f(\alpha)(v_1,\dots,v_r)\edf \Ad_{\tilde f(y)}(\alpha (v_1,\dots,v_r)) \hbox{ for }y\in P,\ v_i\in T_yP.
$$
We have the general formulae:
\begin{equation}\label{equiv-prop}
\begin{split}
\Omega_{f(A)}&=f(\Omega_A)  \hbox { for any }  A \in   {\cal A}(P),\\
f(A+\alpha)&=f(A)+f(\alpha) \hbox { for any } (\alpha,A)\in A^1_\ad(P)\times {\cal A}(P),\\
d_{f(A)}(f(\alpha))&=f(d_A\alpha))   \hbox { for any } (\alpha,A)\in A^r_\ad(P)\times {\cal A}(P),\\
f([\alpha\wedge\alpha'])&=[f(\alpha)\wedge f(\alpha')]  \hbox { for any } (\alpha,\alpha')\in A^1_\Ad(P,\hg)\times A^1_\Ad(P,\hg).
\end{split}
\end{equation}

 The two integrability conditions (\ref{ZentEq}) in Zentner's Theorem \ref{ZentTh} have sense for an arbitrary pair 
$(\alpha,A)\in A^1(M,\Ad(P))\times {\cal A}(P)$, without any admissibility condition on $\alpha$. Moreover, using  (\ref{equiv-prop}) it follows easily that the space of solutions 
$$(A^1_\Ad(P,\hg)\times {\cal A}(P))^Z\subset A^1_\Ad(P,\hg)\times {\cal A}(P)$$
of the equations (\ref{ZentEq}) is gauge invariant.

Theorem \ref{ZentTh} states that the solutions of the equations (\ref{ZentEq}) which are admissible have an important geometric interpretation: they are the pairs $(\alpha,A)$ for which the associated almost complex structure $J^\alpha_A$ is integrable.  Therefore it is a natural problem to give a geometric interpretation of the space of gauge equivalence classes of such pairs. Moreover, we believe that these equations and the corresponding moduli space of solutions are interesting without the admissibility condition, so we state the following  general moduli problem:
\begin{prob}
Let $H$ be a Lie group and $\pi:P\to M$ a principal $H$-bundle. Describe the moduli space
$$
{\cal M}^Z(P)\edf \qmod{(A^1_\Ad(P,\hg)\times {\cal A}(P))^Z}{\Aut(P)}
$$	
of solutions of the equations 
\begin{equation} \label{ZentEqNew}
\left\{\begin{array}{ccc}
d_A \alpha &  = & 0 \\ 
\Omega_A & =&   \frac{1}{2}[ \alpha \wedge \alpha].
\end{array}\right. \tag{$Z$}
\end{equation}
\end{prob}

The moduli space ${\cal M}^Z(P)$  will be regarded as a topological space obtained by endowing ${(A^1_\Ad(P,\hg)\times {\cal A}(P))^Z}/{\Aut(P)}$ with the quotient topology,  the space of solutions $(A^1_\Ad(P,\hg)\times {\cal A}(P))^Z$ being a subspace of the product $A^1_\Ad(P,\hg)\times {\cal A}(P)$ which has a natural Fréchet topology.

Note that our moduli problem (\ref{ZentEqNew}) has obvious similarities with classical moduli problems, e.g. with the moduli problem introduced and studied by Hitchin in the renowned article  \cite{Hi}.  Hitchin's  self-duality equations are equations for a pair consisting  of a connection on a principal bundle $P$ with compact Lie group on a Riemann surface and ``a Higgs field'' of type $P$, i.e. an $\Ad(P)$-valued (1,0)-form.   In our moduli problem, exactly as in Hitchin's moduli problem: \begin{itemize} 
 \item	we have two unknowns, namely a  1-form $\alpha$ and a connection $A$,
 \item  the first equation is a first order equation for the form $\alpha$,
 \item the second equation expresses the curvature of $A$  as a 0-order quadratic expression in $\alpha$.
 \end{itemize}
 Note however that the deformation complex of our moduli problem at a solution $(\alpha,A)$ is not elliptic, so one cannot expect a  finite  dimensional moduli  space.
\vspace{1mm}

We also define the open subspace  
$$ {\cal M}^Z_a(P)\edf \big \{[\alpha,A]\in  {\cal M}^Z (P)|\ \alpha\hbox{ is admissible}\big\}\subset{\cal M}^Z (P),  $$
of integrable admissible pairs; this space is of course empty if $\dim(M)\ne \dim(H)$.

${\cal M}^Z(P)$ (${\cal M}^Z_a(P)$) will be called the moduli space of (admissible) integrable pairs on the principal $H$-bundle $P$.

The formalism developed in \cite[chapter 2]{Chou} will allows one to solve in part our moduli problem, more precisely  to give an explicit description of  ${\cal M}^Z_a(P)$ in the special case when $H$ is a compact semisimple compact Lie group (see  \cite{ChouTe}).  We illustrate below our methods  in the special cases  $H=\SO(3)$, $H=\SU(2)$. 

\subsection{Examples of moduli spaces}

Recall that, by the renowned Stiefel's theorem, any compact  orientable 3-manifold is parallelisable \cite[Problem 12-B, p. 148]{MiSt}.  The Lie algebra $\so(3)$ comes with a canonical orientation and a canonical inner product which are induced by the isomorphism 
$$\R^3\ni u\to (u\times \cdot)\in  \so(3).$$
Similarly, the adjoint bundle $\Ad(P)=P\times_\Ad\so(3)$ of a principal $\SO(3)$-bundle $P\textmap{\pi} M$ comes with a canonical orientation and a canonical  Euclidian structure which is $\nabla_A^\Ad$-parallel for any connection $A$ on $P$.
\vspace{2mm} 

Note that:

\begin{re}\label{RemSO3}

Let $P\textmap{\pi} M$ be principal $\SO(3)$-bundle. For a gauge transformation $f\in \Aut(P)$ denote by $\Ad(f)$ the induced vector bundle automorphism of $\Ad(P)$. 
\begin{enumerate}
\item 	The map $f\mapsto \Ad(f)$ is an isomorphism between $\Aut(P)$  and the group of orientation preserving isometric automorphisms of $\Ad(P)$.	
\item The map $A\mapsto \nabla_A^\Ad$ is an equivariant affine isomorphism from ${\cal A}(P)$ onto the affine space of linear connections on $\Ad(P)$ which are compatible with its canonical Euclidian structure.
\end{enumerate}
\end{re}
We can now prove:
\begin{thr}\label{ModuliSO(3)}
Let $M$ be a closed orientable 3-manifold and $\pi:P\to M$ a principal $\SO(3)$-bundle on $M$.  Then
\begin{enumerate}
\item  If $P$ is non-trivial, then 	${\cal M}^Z_a(P)$ is empty.	
\item  If $P$ is trivial, then the moduli space ${\cal M}^Z_a(P)$ can be naturally identified with the product ${\cal O}(M)\times{\cal H}(M)$, where ${\cal O}(M)$ is the set of orientations of $M$ and ${\cal H}(M)$ is the space of hyperbolic Riemannian metrics on  $M$.
\end{enumerate}

\end{thr}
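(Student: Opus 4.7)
The plan is to use the bundle isomorphism $\beta:T_M\to \Ad(P)$ associated with an admissible $\alpha$ (Remark \ref{alpha-beta}) to transport the canonical orientation and Euclidean structure of $\Ad(P)$ to a pair on $M$, and then to use the curvature formula of Proposition \ref{courbure-nablaM} to identify the resulting metric as hyperbolic. The backward direction is produced by Corollary \ref{hyperbolic-coro} applied to the oriented orthonormal frame bundle $P_g$.

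For part (1), the existence of an admissible $\alpha$ forces $T_M\simeq \Ad(P)$ as vector bundles. By Stiefel's theorem $T_M$ is trivial, hence so is $\Ad(P)$. Since $\Ad:\SO(3)\to \SO(\so(3))$ is a Lie group isomorphism, the functor $P\mapsto \Ad(P)$ is an equivalence of groupoids between principal $\SO(3)$-bundles and oriented rank-3 Euclidean vector bundles on $M$; over a 3-manifold, triviality as a vector bundle implies triviality as an oriented Euclidean bundle (orthonormalize, then adjust orientation if necessary), so $P$ itself is trivial, contradicting the hypothesis.

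For part (2), assume $P$ is trivial. Define $\Psi:{\cal M}^Z_a(P)\to {\cal O}(M)\times {\cal H}(M)$ by sending $[\alpha,A]$ to the pair $(o,g)$ obtained by pulling back via $\beta$ the canonical orientation and Euclidean structure on $\Ad(P)$; this is well-defined on gauge classes because, by Remark \ref{RemSO3}, a gauge transformation $f$ replaces $\beta$ by $\Ad(f)\circ\beta$ with $\Ad(f)$ an orientation-preserving isometry. To see that $g$ is hyperbolic, first note that $\nabla^\alpha_A$ is torsion-free by Proposition \ref{courbure-nablaM} and is $g$-compatible (since $\nabla^\Ad_A$ preserves the canonical Euclidean structure on $\Ad(P)$), so $\nabla^\alpha_A$ is the Levi-Civita connection of $g$. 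Next, under the identification $\R^3\simeq \so(3)$ the Lie bracket equals the cross product by equation (\ref{isoAlgLie}), so the induced Lie algebra bundle structure $[\cdot,\cdot]_\alpha$ on $T_M$ coincides with the cross product determined by $(g,o)$; the curvature formula of Proposition \ref{courbure-nablaM} then reads $R^{\nabla^\alpha_A}(u,v)(w)=(u\times v)\times w$, and comparison with (\ref{CourbVarHyp}) in Proposition \ref{courbure-constante} forces $\kappa=-1$, so $g\in {\cal H}(M)$.

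The inverse $\Phi:{\cal O}(M)\times {\cal H}(M)\to {\cal M}^Z_a(P)$ sends $(o,g)$ to the gauge class of the canonical admissible integrable pair $(\alpha_g,A_g)$ on the oriented orthonormal frame bundle $P_g$ (Corollary \ref{hyperbolic-coro}), transported to $P$ via any principal bundle isomorphism $P\simeq P_g$; such an isomorphism exists because $P_g$ is trivial (Stiefel together with Gram-Schmidt and sign adjustment), and different choices differ by a gauge transformation, so $\Phi$ is well-defined. The identity $\Psi\circ \Phi=\mathrm{id}$ holds because $\beta_{\alpha_g}:T_M\to \Ad(P_g)$ is the identity under the canonical identification $\so(T_M)\simeq T_M$, so the pulled-back structures are precisely $(o,g)$. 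The identity $\Phi\circ \Psi=\mathrm{id}$ follows because, for $g=\Psi([\alpha,A])$, the isomorphism $\beta$ induces an isomorphism of admissible integrable triples between $(P,\alpha,A)$ and $(P_g,\alpha_g,A_g)$. The main obstacle lies in part (1), specifically the step asserting that triviality of $\Ad(P)$ as an oriented Euclidean vector bundle implies triviality of $P$ as a principal $\SO(3)$-bundle; this rests on $\Ad:\SO(3)\to \SO(\so(3))$ being a Lie group isomorphism together with the equivalence of triviality notions for rank-3 bundles over a 3-manifold.
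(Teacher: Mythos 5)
Your proposal is correct and follows essentially the same route as the paper: part (1) via Stiefel's theorem and the groupoid equivalence $P\mapsto\Ad(P)$, and part (2) via the map $\Psi([\alpha,A])=(o_\alpha,g_\alpha)$, identifying $\nabla^\alpha_A$ as the Levi-Civita connection and reading off $\kappa=-1$ from the curvature formula. The only cosmetic difference is that you exhibit an explicit inverse $\Phi$ built from Corollary \ref{hyperbolic-coro}, whereas the paper proves injectivity and surjectivity of $\Psi$ separately; the underlying arguments (the surjectivity construction is your $\Phi$, and the injectivity argument is your check that $\Phi\circ\Psi=\mathrm{id}$) coincide.
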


\begin{proof}
(1) By Remark \ref{alpha-beta} we know that an admissible form $\alpha\in A^1_\Ad(P,\so(3))$ induces a bundle isomorphism $\beta:T_M\to \Ad(P)$. Since $T_M$ is trivial by Stiefel's theorem, it follows that $\Ad(P)$ is a trivial  vector bundle of rank 3. But the classification of oriented real vector bundles coincides with the classification of oriented Euclidian vector bundles, so  $\Ad(P)$ is trivial as oriented Euclidian bundle.

On the other hand, the functor $P\mapsto \Ad(P)$ is an equivalence between the groupoid of principal $\SO(3)$-bundles and the  groupoid of  oriented Euclidian vector bundles. Therefore, if an admissible form $\alpha$ exists, then $P$ must be trivial. \vspace{2mm}

(2) Suppose now that $P$ is trivial. Let $(\alpha,A)$ be a solution of the equations (\ref{ZentEqNew}) with $\alpha$ admissible. Via the isomorphism $\beta:T_M\to \Ad(P)$ associated with $\alpha$,   the canonical oriented Euclidian bundle structure on $\Ad(P)$ defines an oriented Euclidian bundle structure on $T_M$, so an orientation $o_\alpha$ and a Riemannian metric $g_\alpha$ on $M$. Since the canonical Euclidian structure on $\Ad(P)$ is $\nabla_A^\Ad$-parallel, it follows that $g_\alpha$ is $\nabla_A^\alpha$-parallel, so, since $T^{\nabla_A^\alpha}=0$ by Proposition \ref{courbure-nablaM}, $\nabla_A^\alpha$ is the Levi-Civita connection of $g_\alpha$. The curvature formula given by the same proposition now shows that $g_\alpha$ is hyperbolic.
\vspace{1mm}

Now note that the pair $(o_\alpha,g_\alpha)$ depends only on the gauge equivalence class of $(\alpha,A)$. Indeed, let $f\in \Aut(P)$ and $\alpha'=f(\alpha)$. The isomorphism $\beta':T_M\to \Ad(P)$ associated with $\alpha'$ is given by $\beta'=\Ad(f)\circ \beta$. The claim follows, because $\Ad(f)$ is orientation preserving and isometric.

Therefore we obtained a well defined map
$$
\Psi: {\cal M}^Z_a(P)\to {\cal O}(M)\times{\cal H}(M), \ \Psi([\alpha,A])\edf (o_\alpha,g_\alpha).
$$

{\it  $\Psi$ is injective:}

 Indeed, let $(\alpha,A)$, $(\alpha',A')\in (A^1_\Ad(P,\hg)\times {\cal A}(P))^Z$ such that $(o_{\alpha'},g_{\alpha'})=(o_\alpha,g_\alpha)$. It follows that the vector bundle automorphism $\beta'\circ \beta^{-1}$  is orientation preserving and preserves the canonical  Euclidian structure of $\Ad(P)$, so, by Remark \ref{RemSO3}, it is induced by an automorphism $f\in\Aut(P)$. We have $\beta'=\Ad(f)\circ \beta$, which implies $\alpha'=f(\alpha)$.  It remains to prove that $A'=f(A)$. Let $\nabla$ be Levi-Civita connection of $g_\alpha=g_{\alpha'}$. We have
$$
\nabla_{A'}^\Ad=\beta'(\nabla)=\Ad(f)(\beta(\nabla))=\Ad(f)(\nabla_A^\Ad),
$$
so $\nabla_A^{\alpha'}=\Ad(f)(\nabla_A^{\alpha})$.  By Remark \ref{RemSO3},   the equality $\nabla_{A'}^\Ad=\Ad(f)(\nabla_A^\Ad)$ implies $A'=f(A)$, and the injectivity of $\Psi$ is proved. 
\vspace{2mm}\\
{\it  $\Psi$ is surjective:}

Indeed, let $(o,g)\in {\cal O}(M)\times {\cal H}(M)$. Therefore $(M,o,g)$ becomes an oriented hyperbolic 3-manifold. By Corollary \ref{hyperbolic-coro}, the principal bundle $P_g^o$ of $o$-compatible orthonormal frames of the tangent bundle comes with a canonical admissible integrable pair $(\alpha_0,A_0)$, where $A_0$ is the Levi-Civita connection of $g$ and $\alpha_0$ is given by $\alpha_0(u)(v)=u\times v$. By Stiefel's theorem we know that $P_g^o$ is trivial. Since $P$ is also trivial by assumption, there exists a bundle isomorphism $h:P\to P_g^o$. Putting $\alpha\edf h^*(\alpha_0)$, $A\edf h^*(A_0)$, we obtain an integrable admissible pair $(\alpha,A)$ on $P$, and obviously $\Psi([\alpha,A])=(o,g)$.

\end{proof}

Let $\theta:\Spin(n)\to \SO(n)$ be the canonical Lie group epimorphism and let $(M,o,g)$ be an {\it oriented}  Riemannian $n$-dimensional manifold.   We refer to \cite{LM}, \cite{Te} for the following well known definitions and results:
\begin{dt}
A 	$\Spin$ structure on $(M,o,g)$ is a principal bundle morphism $P\textmap{\gamma} P_g^o$ of type $\theta$,  where $P$ is principal $\Spin(n)$-bundle and $P_g^o$ is the principal $\SO(n)$-bundle of $o$-compatible orthonormal frames. Two $\Spin$ structures  $P\textmap{\gamma} P_g^o$,  $P'\textmap{\gamma'} P_g^o$ are called equivalent if there exists an isomorphism  $f:P\to P'$ of $\Spin(n)$-bundles such that $\gamma'\circ f=\gamma$.  
\end{dt}

\begin{re}
An oriented  Riemannian manifold $(M,o,g)$ admits a $\Spin$ structure if and only if $w_2(T_M)=0$. If this is the case, the set $\Spin(M,o,g)$ of equivalence  classes of $\Spin$ structures on $(M,o,g)$ is naturally an $H^1(M,\Z_2)$-torsor. 	
\end{re}

\begin{re}
Fix an orientation $o$ on $M$. Suppose that $w_2(T_M)=0$. 

For two Riemannian metrics $g$, $g'$ on $M$ we have a canonical isomorphism $\Spin(M,o,g)\textmap{\simeq}\Spin(M,o,g')$ of $H^1(M,\Z_2)$-torsors, so one can define in a coherent way the torsor $\Spin(M,o)$ of equivalence classes of $\Spin$ structures on the oriented manifold $(M,o)$.
\end{re}

For an {\it orientable} manifold $M$ we define
\begin{equation} \label{Spin(M)}
\Spin(M)\edf\{(o,\sigma)|\ o\in {\cal O}(M),\ \sigma\in \Spin(M,o)\}.	
\end{equation}

In dimension 3 we have the following important properties:

\begin{re}
The adjoint morphism $\Ad:\SU(2)\to \SO(\su(2))$ is a double cover, so it identifies $\SU(2)$ with $\Spin(3)$.
\end{re}

\begin{pr}
Any topological principal $\SU(2)$-bundle on a  CW complex of dimension $d\leq 3$ is trivial. 
\end{pr}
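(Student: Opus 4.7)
The plan is to invoke classical obstruction theory in its simplest form: a principal $H$-bundle is trivial if and only if it admits a global continuous section, so it suffices to construct such a section of the given $\SU(2)$-bundle $P\to X$ by inducting on the skeleta of $X$. The crucial input is that $\SU(2)$ is diffeomorphic to the 3-sphere $S^3$; consequently its homotopy groups $\pi_0(\SU(2))$, $\pi_1(\SU(2))$, $\pi_2(\SU(2))$ all vanish, which will make every obstruction encountered in the induction zero for dimensional reasons.

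Concretely, I would start from the 0-skeleton $X^{(0)}$, where a section is built by simply choosing an arbitrary point in each of the finitely-or-discretely-many fibers. For the inductive step, I would assume a section $s$ has been produced over $X^{(n-1)}$ with $n\in\{1,2,3\}$, and consider an $n$-cell with attaching map $\varphi:S^{n-1}\to X^{(n-1)}$ and characteristic map $\Phi:D^n\to X^{(n)}$. Since $D^n$ is contractible the pulled-back bundle $\Phi^*P$ is trivial, so fixing a trivialisation turns the restriction of $s$ to $\partial D^n=S^{n-1}$ into a map $S^{n-1}\to \SU(2)$; the standard obstruction-theoretic fact is that the extendability of this map to $D^n$ is governed by its homotopy class in $\pi_{n-1}(\SU(2))$. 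Because $\pi_{n-1}(\SU(2))=\pi_{n-1}(S^3)=0$ for $n\in\{1,2,3\}$, the extension always exists cell by cell, and after three inductive rounds I obtain a continuous section on all of $X=X^{(3)}$.

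An equivalent formulation, which I would mention for clarity, is via classifying spaces: topological principal $\SU(2)$-bundles on $X$ are in bijection with $[X,B\SU(2)]$, and $B\SU(2)$ is 3-connected since $\pi_i(B\SU(2))\cong\pi_{i-1}(\SU(2))$ vanishes for $i\leq 3$; a map from a CW complex of dimension at most 3 into a 3-connected space is null-homotopic by the standard Whitehead/obstruction-theoretic argument, so every such bundle is isomorphic to the trivial one. There is no substantial obstacle to overcome: the only ingredients are the identification $\SU(2)\simeq S^3$, the vanishing of $\pi_0$, $\pi_1$, $\pi_2$ of $S^3$, and cellular obstruction theory, all of which are standard. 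The argument is in fact a special case of the general principle that a fiber bundle whose fiber has trivial homotopy in all dimensions below $\dim X$ admits a global section.
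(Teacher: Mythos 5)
Your argument is correct and is essentially the same as the paper's: the paper also reduces triviality to the existence of a global section and invokes Steenrod's obstruction theory together with the vanishing of $\pi_k(\SU(2))$ for $k\leq 2$. Your cell-by-cell construction and the classifying-space reformulation are just more detailed expositions of that same argument.
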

\begin{proof}
Let $X$ be a CW complex of dimension $d\leq 3$. It suffices to show that any  principal $\SU(2)$-bundle on $X$ admits a continuous section. This follows using Steenrod's obstruction theory \cite{St} taking into account that $\pi_k(\SU(2))=0$ for $1\leq k\leq 2$.	
\end{proof}

Taking into account that on a differentiable manifold the classification of differentiable bundles coincides with  the classification of topological bundles, we infer:
\begin{co}
Any differentiable $\SU(2)$-bundle on a differentiable manifold of dimension $d\leq 3$ is trivial.	
\end{co}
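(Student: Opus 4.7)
The plan is to deduce this from the preceding proposition by showing that, for a Lie group structure group, the smooth and continuous classifications of principal bundles on a smooth manifold coincide. Concretely, let $M$ be a differentiable manifold with $\dim M\leq 3$ and let $Q\to M$ be a differentiable principal $\SU(2)$-bundle; we want to produce a global differentiable section, equivalently a differentiable trivialisation.

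First I would invoke the fact that any smooth manifold of dimension $d$ admits the homotopy type of a CW complex of dimension at most $d$ (for instance via smooth triangulation, or via Morse theory when $M$ has the homotopy type of a finite complex). In particular the underlying topological space of $M$ falls within the hypothesis of the preceding proposition. Hence the continuous principal $\SU(2)$-bundle obtained from $Q$ by forgetting the smooth structure is topologically trivial, i.e.\ admits a continuous section $s:M\to Q$.

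Next I would upgrade $s$ to a smooth section. This is a standard consequence of Whitney's approximation theorem applied in local trivialisations: any continuous section of a smooth fibre bundle whose typical fibre is a smooth manifold can be approximated uniformly by a smooth section in the same homotopy class. Since the total space of $Q$ is a smooth manifold and the projection $Q\to M$ is a smooth submersion, one can cover $M$ by trivialising charts, express $s$ locally as a continuous $\SU(2)$-valued function, and use a partition of unity together with smooth approximation of continuous maps into the Lie group $\SU(2)$ to patch together a global smooth section $s'$ close to $s$. The existence of such a smooth $s'$ trivialises $Q$ differentiably.

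The only step that requires care is the smooth approximation, but this is entirely standard for Lie group bundles on paracompact manifolds, so no real obstacle arises. The corollary thus reduces cleanly to the proposition above.
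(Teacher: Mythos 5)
Your argument is correct and is essentially the paper's: the paper likewise deduces the corollary from the preceding proposition by invoking the coincidence of the differentiable and topological classifications of principal bundles over a smooth manifold, which is exactly the fact you prove via triangulation plus smooth approximation of a continuous section. You simply spell out the standard proof of that coincidence, which the paper states without proof.
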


Using these definitions and results, the method used in the proof of Theorem \ref{ModuliSO(3)} gives:

\begin{thr}\label{ModuliSU(2)}
Let $M$ be a closed orientable 3-manifold and $\pi:P\to M$ a principal $\SU(2)$-bundle on $M$.  Then the moduli space ${\cal M}^Z_a(P)$ can be naturally identified with the product $\Spin(M)\times {\cal H}(M)$, where    ${\cal H}(M)$ is the space of hyperbolic Riemannian metrics on $M$. 

\end{thr}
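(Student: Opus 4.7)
The plan is to mimic the strategy used in the proof of Theorem \ref{ModuliSO(3)}, with the $\Spin$ structure providing the extra datum needed to refine the $\SO(3)$ bijection. Via the identification $\Ad:\SU(2)\to\Spin(3)$, the Lie algebra $\su(2)$ inherits a canonical orientation and an $\Ad$-invariant inner product, so for any principal $\SU(2)$-bundle $P\to M$ the adjoint bundle $\Ad(P)=P\times_\Ad\su(2)$ is an oriented Euclidean vector bundle of rank $3$, and its oriented Euclidean structure is $\nabla_A^\Ad$-parallel for every $A\in{\cal A}(P)$. Moreover, the canonical $\theta$-equivariant projection
$$\gamma_P:P\longrightarrow P\times_\theta\SO(3)$$
realises $P\times_\theta\SO(3)$ as the oriented orthonormal frame bundle of $\Ad(P)$.

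Given an admissible solution $(\alpha,A)$, the first step is to extract a triple in $\Spin(M)\times{\cal H}(M)$ as follows. The isomorphism $\beta:T_M\to\Ad(P)$ of Remark \ref{alpha-beta} transports the canonical oriented Euclidean structure of $\Ad(P)$ to an orientation $o_\alpha$ and a metric $g_\alpha$ on $M$. Since this structure is $\nabla_A^\Ad$-parallel and $T^{\nabla_A^\alpha}=0$ by Proposition \ref{courbure-nablaM}, $\nabla_A^\alpha$ is the Levi-Civita connection of $g_\alpha$; the curvature formula of the same proposition, compared with (\ref{CourbVarHyp}), forces $g_\alpha$ to have constant sectional curvature $-1$. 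Composing $\gamma_P$ with the isomorphism $P\times_\theta\SO(3)\simeq P_{g_\alpha}^{o_\alpha}$ induced by $\beta$ yields a $\Spin$ structure $\sigma_\alpha:P\to P_{g_\alpha}^{o_\alpha}$ on $(M,o_\alpha,g_\alpha)$. Gauge invariance of $(o_\alpha,g_\alpha,[\sigma_\alpha])$ follows exactly as in the $\SO(3)$ case, using that any $f\in\Aut(P)$ descends to an orientation- and metric-preserving automorphism $\bar f\in\Aut(P\times_\theta\SO(3))$ commuting with $\gamma_P$. This defines the map $\Psi:{\cal M}^Z_a(P)\to\Spin(M)\times{\cal H}(M)$.

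For surjectivity, given $(o,g,[\sigma])$ pick a representative $\gamma:Q\to P_g^o$ of $[\sigma]$, pull back via $\gamma$ the canonical admissible integrable pair $(\alpha_0,A_0)$ on $P_g^o$ supplied by Corollary \ref{hyperbolic-coro}, and transport the result to $P$ by any bundle isomorphism $h:P\to Q$ (which exists because every $\SU(2)$-bundle on a closed $3$-manifold is trivial); a direct verification shows $\Psi([h^*\gamma^*\alpha_0,h^*\gamma^*A_0])=(o,g,[\sigma])$. For injectivity, if $\Psi([\alpha,A])=\Psi([\alpha',A'])=(o,g,[\sigma])$, then $\beta'\circ\beta^{-1}$ is an orientation-preserving isometric automorphism of $\Ad(P)$, corresponding to $\bar f\in\Aut(P\times_\theta\SO(3))$; the hypothesis $[\sigma_\alpha]=[\sigma_{\alpha'}]$ is precisely what is needed to lift $\bar f$ through $\gamma_P$ to a gauge transformation $f\in\Aut(P)$ satisfying $\Ad(f)=\beta'\circ\beta^{-1}$, whence $\beta_{f(\alpha)}=\Ad(f)\circ\beta_\alpha=\beta'=\beta_{\alpha'}$ gives $f(\alpha)=\alpha'$. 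The identity $A'=f(A)$ then follows from $\nabla_{f(A)}^{f(\alpha)}=\nabla_A^\alpha=\nabla_{A'}^{\alpha'}=$ Levi-Civita of $g$ together with the $\SU(2)$-analogue of Remark \ref{RemSO3}(2), which holds because $\ad:\su(2)\to\so(3)$ is an isomorphism.

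\textbf{Main obstacle.} The genuinely new ingredient is the lifting step in the injectivity argument: one needs an $\SU(2)$-analogue of Remark \ref{RemSO3}(1) asserting that an orientation-preserving isometric automorphism of $\Ad(P)$ lifts to a gauge transformation of $P$ \emph{if and only if} the corresponding $\SO(3)$-automorphism of $P\times_\theta\SO(3)$ preserves the $\Spin$ structure class. This is where the $H^1(M,\Z_2)$-torsor structure on $\Spin(M,o)$ enters the argument in an essential way, reflecting the kernel and obstruction of the extension $\Aut(P)\to\Aut(P\times_\theta\SO(3))$ induced by the double cover $\theta$.
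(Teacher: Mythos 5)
Your proposal is correct and follows exactly the route the paper intends: the paper gives no written proof of Theorem \ref{ModuliSU(2)} beyond the remark that ``the method used in the proof of Theorem \ref{ModuliSO(3)} gives'' it, and your argument is precisely that method, with the spin structure $\sigma_\alpha:P\to P_{g_\alpha}^{o_\alpha}$ obtained from $\gamma_P$ and $\beta$ supplying the extra datum. The lifting criterion you isolate as the ``main obstacle'' --- a gauge transformation of $P\times_\theta\SO(3)$ lifts to $\Aut(P)$ if and only if it fixes the class of the spin structure $P\to P\times_\theta\SO(3)$, with kernel $H^0(M,\Z_2)$ and obstruction governed by the $H^1(M,\Z_2)$-torsor structure --- is the standard fact and is stated correctly, so there is no gap.
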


We conclude this article  by stating the following problem which concerns moduli spaces of  non-admissible integrable pairs.
\vspace{1mm}

 Let  $P\textmap{\pi}M$ be a principal $H$-bundle. A tensorial 1-form $\alpha\in A^1_\Ad(P,\hg)=A^1(M,\Ad(P))$ can be regarded as a section in the bundle $\Hom(T_M,\Ad(P))$. Suppose  $\dim(M)=\dim(H)=n$. In this case we can define  
$$ \det(\alpha)\in \Gamma\big(M, \Hom(\extp^n(T_M),\extp^n(\Ad(P))\big)=\Gamma\big(M,\extp^n(T_M)^\vee\otimes \extp^n(\Ad(P)\big).$$
Note that $\alpha$ is admissible if and only if $\det(\alpha)$ is a nowhere vanishing section of the real line bundle $\extp^n(T_M)^\vee\otimes \extp^n(\Ad(P)$. If $\det(\alpha)$ is transversal to the zero section (i.e. its intrinsic derivative is surjective at any vanishing point), then its zero locus $Z(\det(\alpha))$ will be a smooth hypersurface of $M$.

\begin{prob}
Let $M$ be a closed 3-manifold, $S\subset M$ a fixed closed submanifold  of dimension 2, and $P\textmap{\pi}M$ a principal $\SO(3)$-bundle on $M$. Describe explicitly the moduli space of integrable pairs $(\alpha,A)\in {(A^1_\Ad(P,\hg)\times {\cal A}(P))^Z}$, where $\det(\alpha)$ is transversal to the zero section and $Z(\det(\alpha))=S$.  
	
\end{prob} 

Note that such an integrable pair defines a hyperbolic metric $g_\alpha$ on the complement $M\setminus S$. A first step in solving this problem is to study the asymptotics of $g_\alpha$ near $S$.

\end{document}